\newtheorem{theorem}{Theorem}[section]
\theoremstyle{definition}
\theoremstyle{remark}
\newtheorem{remark}[theorem]{Remark}
\numberwithin{equation}{section}
\numberwithin{table}{section}
\def\jump#1{[\![{#1}]\!]} % the jump of a function
\def\mean#1{\{\!\!\{{#1}\}\!\!\}} % the mean of a function
\def\intc#1{\int_{I_j}{#1}\mathrm{dx} } % the integral of a function at each cell
\newcommand{\bld}[1]{\boldsymbol{#1}}
\begin{document}

\title[Dispersion for energy conserving  DG]{Dispersive behavior 
of an energy-conserving discontinuous Galerkin method for the one-way wave equation}

\author{Mark Ainsworth}
\address{Division of Applied Mathematics, Brown University, 182 George St,
Providence RI 02912, USA.}
\email{Mark\_Ainsworth@brown.edu}

\author{Guosheng Fu}
\address{Division of
Applied Mathematics, Brown University, Providence, RI
 02912}
\curraddr{}
\email{guosheng\_fu@brown.edu}

%    \subjclass is required.
% \subjclass[2010]{Primary 65M60, 65M12, 65M15}

%\date{August 22, 2013 and, in revised form, October 30, 2014 and November XX, 2014.}

\dedicatory{}

\keywords{discontinuous Galerkin method, energy conserving, dispersion analysis}

%    Abstract is required.
\begin{abstract}
The dispersive behavior of the
recently proposed energy-conserving discontinuous Galerkin (DG) method
by Fu and Shu \cite{FuShu18} is analyzed and compared with the classical centered and 
upwinding DG schemes. It is shown that the new scheme gives a 
significant improvement over the 
classical centered and upwinding DG schemes in terms of dispersion error.
Numerical results are presented to support the theoretical findings.
\end{abstract}

\maketitle
\section{Introduction}
\label{sec:intro}
The quest for stable
and accurate
% , accurate, and energy conserving 
schemes for systems of hyperbolic conservation laws
has occupied researchers for several decades and continues to this day 
% \cite{Yee66,Gustafsson13,Astley00, GottliebHesthaven01, CockburnShu01,Tadmor03, Abgrall16, Abgrall17}.
% \cite{Gustafsson13,Astley00, GottliebHesthaven01, CockburnShu01,Tadmor03, Abgrall16, Abgrall17}.
\cite{Abgrall16, Abgrall17}
with active research into 
% wavelet based schemes, 
finite difference methods, finite volume methods, spectral methods and 
a variety of finite element Galerkin schemes.
The current consensus seems to be that discontinuous Galerkin (DG) schemes \cite{Cockburn00} are the most 
promising, although they too have their drawbacks even if one restricts attention to linear hyperbolic systems.
In this setting, one wishes to have numerical schemes which are able to propagate 
discrete waves at, or near to,  the same
speed at which continuous waves are propagated 
by the original hyperbolic system. The dispersive and dissipative behavior of a numerical 
scheme compared with that of the original system is of considerable interest and had been widely studied
\cite{Ihlenburg98,Durran99,Cohen02,Ainsworth04,AinsworthMonk06, Ainsworth14b}.

% The major difficulty of the numerical simulation of nonlinear hyperbolic systems 
% lies in the discontinuity nature ({\it shocks}) of the exact solution \cite{Dafermos16}.
% Although shocks do not appear in the linear case,
% there are issues with numerical dispersion for long time simulations \cite{Ihlenburg98,Durran99}. 

This paper is devoted to a dispersion analysis of the recently proposed energy-conserving DG method \cite{FuShu18}.
To fix ideas, we consider the following one-way wave equation with unit wave speed:
\begin{align}
\label{advection1d}
 u_t +  u_x & =0, &&\hspace{-2.8cm}  x\in \mathbb{R}, t >0,
\end{align}
for suitable initial data.
% The analysis can be applied equally well to multidimentional linear hyperbolic systems.
To begin with, we confine our attention to uniform partitions of $\mathbb{R}$ consisting of 
cells of size $h>0$, whose nodes are located at the points $h(\mathbb{Z}+1/2)$.
Denote the $j$th cell $I_j = ((j-1/2) h, (j+1/2)h)$, 
and let $V_h^N$ denote the space of piecewise continuous polynomials of degree $N$
on the partition:
\begin{align}
\label{space-1d}
 V_h^{N} = \left\{v \in L^2(\mathbb{R}): v|_{I_j} \in \mathbb{P}_N(I_j),
~~ \forall j\in\mathbb{Z}\right\}, 
\end{align}
where $\mathbb{P}_{N}(I_j)$ denotes the set of polynomials of degree up to $N\ge 0$ 
defined on the cell $I_j$.
For any function $p\in V_h^N$, we let $p_{j-1/2}^-$ and $p_{j-1/2}^+$ be the values
of $p$ at the node  $x_{j-1/2} = (j-1/2)h$, from the left
cell, $I_{j-1}$, and from the right cell, $I_{j}$, respectively.
In what follows, we employ $\jump p|_{j-1/2} = p^+_{j-1/2} - p^-_{j-1/2}$ and $\mean p|_{j-1/2} =
\frac12(p^+_{j-1/2} +
p^-_{j-1/2})$ to represent the jump and the mean value of $p$ at each node. 

The DG method for \eqref{advection1d} reads as follows: 
Find the unique function $u_h = u_h(t)\in V_h^N$ such that
%  \begin{subequations}
%  \label{scheme:adv1d}
\begin{align}
\label{scheme:dg}
\intc{(u_h)_tv_h} - \intc{u_h (v_h)_x} +
\widehat{u}_h v_h^-|_{j+\frac12} -
\widehat{u}_h v_h^+|_{j-\frac12} = &\;0,
% \label{scheme:adv1d-2}
% \intc{(\phi_h)_t\psi_h} + \intc{\phi_h (\psi_h)_x} -
% \widehat{\phi}_h \psi_h^-|_{j+\frac12} +
% \widehat{\phi}_h \psi_h^+|_{j-\frac12} =&\; 0,
\end{align}
%  \end{subequations}
holds for all $v_h\in V_h^N$ and all $j\in\mathbb{Z}$.
The classical upwinding DG method, denoted by (U), uses numerical fluxes chosen to be
\[
  \widehat{u}_h|_{j-\frac12} =  \; \mean {u_h}|_{j-\frac12} +\frac12\jump {u_h}|_{j-\frac12},
\]
while the centered DG method, denoted by (C),  uses numerical fluxes given by
\[
  \widehat{u}_h|_{j-\frac12} =  \; \mean {u_h}|_{j-\frac12}.
\]

The  method (U) is energy dissipative in the sense that 
\begin{align}
\label{disp}
 \frac12\frac{\mathrm{d}}{\mathrm{dt}}\int_{\mathbb{R}}u_h^2\mathrm{dx}
 =-\sum_{j\in\mathbb{Z}}\frac12(\jump{u_h})^2|_{j-1/2}\le 0,
\end{align}
while the method (C) is energy-conservative 
\[
 \frac12\frac{\mathrm{d}}{\mathrm{dt}}\int_{\mathbb{R}}u_h^2\mathrm{dx}
 =0.
\]
Despite being energy conserving, the centered flux scheme (C) is seldom used in practice owing
to the reduced stability properties of the scheme compared with the upwinding scheme (U), c.f. \cite{CockburnShu98}. 
For this reason, the scheme (U) is often preferred  and the lack of energy conservation
tolerated. Expression \eqref{disp} shows that if the jump terms $\jump{u_h}|_{j-\frac12}$
are non-zero then energy will be dissipated and, importantly, 
that there is no mechanism whereby the dissipated
energy can be regained by the scheme.

Recently, Fu and Shu \cite{FuShu18} proposed
an energy-conserving discontinuous Galerkin (DG) method for linear symmetric hyperbolic systems, and 
gave an optimal a priori error estimate
for the method in one dimension, and in multi-dimensions on tensor-product meshes. 
Numerical evidence presented in \cite{FuShu18} suggests that the scheme is 
optimally convergent on general triangular meshes, and 
has
superior dispersive properties of the new DG method comparing with the 
(energy dissipative) upwinding DG method (U) and the (energy conservative) 
centered DG method (C) translating into improved accuracy
for long time simulations.

The method of Fu and Shu is unusual in that it begins at the continuous level
by introducing an auxiliary advection equation (with the opposite wave speed to that in 
the equation for $u$), to obtain
the following (decoupled) system:
 \begin{subequations}
 \label{aux-adv}
\begin{align}
 u_t +  u_x & =0, &&\hspace{-2.8cm}  x\in\mathbb{R}, t>0,\\
 \phi_t -  \phi_x & =0, &&\hspace{-2.8cm}  x\in\mathbb{R}, t>0,
\end{align}
with initial condition $u(x,0)=u_0(x)$ and $\phi(x,0)=0$.
\end{subequations}
Obviously the solution $\phi$ is identically {\it zero}. However, this will not be the case 
for the DG approximation \cite{FuShu18} of the system, where the (non-zero) 
approximation of the second
equation is exploited to obtain energy conservation at the discrete level.

The DG method \cite{FuShu18} 
for \eqref{aux-adv} reads as follows: 
Find the unique function $(u_h, \phi_h) = (u_h(t), \phi_h(t))
\in V_h^N\times V_h^N$ such that
 \begin{subequations}
 \label{scheme:adv1d}
\begin{align} \label{scheme:adv1d-1}
\intc{(u_h)_tv_h} - \intc{u_h (v_h)_x} +
\widehat{u}_h v_h^-|_{j+\frac12} -
\widehat{u}_h v_h^+|_{j-\frac12} = &\;0,\\
\label{scheme:adv1d-2}
\intc{(\phi_h)_t\psi_h} + \intc{\phi_h (\psi_h)_x} -
\widehat{\phi}_h \psi_h^-|_{j+\frac12} +
\widehat{\phi}_h \psi_h^+|_{j-\frac12} =&\; 0,
\end{align}
 \end{subequations}
holds for all $(v_h, \psi_h)\in V_h^N\times V_h^N$ and all $j\in\mathbb{Z}$, where
$\widehat u_h$ and $\widehat\phi_h$ denote the numerical fluxes
\begin{subequations}
\label{flux:2x2}
\begin{align}
\label{flux:1}
 \widehat{u}_h|_{j-\frac12} =  &\; \mean {u_h}|_{j-\frac12} +\frac12\alpha\jump {\phi_h}|_{j-\frac12},\\ 
\label{flux:2}
 \widehat{\phi}_h|_{j-\frac12} = &\; \mean {\phi_h}|_{j-\frac12} +\frac12\alpha\jump {u_h}|_{j-\frac12}.
\end{align}
\end{subequations}
% with $\alpha$ being a positive constant.
The constant in the numerical fluxes \eqref{flux:2x2} 
is chosen to be $\alpha=1$ in \cite{FuShu18}, and we denote the corresponding DG method by (A).
However, in this article, we will also consider the following choice
\begin{align}
 \label{alpha-opt}
 \alpha = \left\{
 \begin{tabular}{ll}
  $\sqrt{\frac{4}{3}}$ & if $N=0$,\\[1ex]
  $\sqrt{\frac{N(2N+3)}{(N+1)(2N+1)}}$ & if $N$ is odd,\\[1ex]
  $\sqrt{\frac{(N+1)(2N+1)}{N(2N+3)}}$ & if $N>0$ is even,
 \end{tabular}
 \right.
\end{align}
and denote the corresponding DG method by (A*).

Each of methods (A) and (A*) are energy conservative \cite{FuShu18} with respect to the following modified energy:
\begin{align}
\label{aux-e}
 \frac12\frac{\mathrm{d}}{\mathrm{dt}}\int_{\mathbb{R}}(u_h^2+\phi_h^2)\mathrm{dx}
 =0. 
\end{align}
Of course, this does not mean that the individual energy 
$\int_{\mathbb{R}}u_h^2\mathrm{dx}$ 
and $\int_{\mathbb{R}}\phi_h^2\mathrm{dx}$ are conserved in isolation. One way to view the scheme 
\eqref{scheme:adv1d} is to regard the auxiliary variable $\phi_h$ as a 
temporary store for collecting energy 
dissipated in \eqref{scheme:adv1d-1} which is then reinjected 
back into the equation for $u_h$
through the flux term $\jump{\phi_h}_{j-\frac12}$ in 
\eqref{flux:1}, still resulting in the overall energy of the system being conserved as shown by \eqref{aux-e}.
More interesting is that this exchange of energy in $u_h$ and $\phi_h$ also seems to 
render methods (A) and (A*)
superior to the method (U) and (C) in terms of numerical dispersion, as we shall see in Section \ref{sec:num}.

% The rest of the paper is organized as follows.
% We present numerical results in Section \ref{sec:num} to illustrate the dispersive behavior
% of the above mentioned DG methods.
Section \ref{sec:main} contains a summary of the main results from our dispersion 
analysis in Section \ref{sec:disp}, and an explanation of the numerical results 
on uniform meshes conducted in Section \ref{sec:num}.
% contains the main results of the dispersion analysis.
Conclusions are drawn in Section \ref{sec:conclude}.

\section{Illustration of Dispersive Behavior of the DG schemes}
\label{sec:num}
In this section  we carry out a simple numerical comparison of the above mentioned four DG methods.
We consider equation \eqref{aux-adv} on the unit interval $I=[0,1]$ with periodic boundary conditions,
and take the initial condition $u_0(x)=\sin(\omega x)$ with frequency $\omega = 2\pi$. 
Hence the true solution is $u(x,t) = \sin(2\pi(x-t)).$
Since we are primarily interested in the spatial discretisation, 
we use a sufficiently high-order time discretization so as to render the temporal error 
 negligible compared with the spatial error.

Numerical results for the four DG methods mentioned above with polynomial degree $N=0$
on 20 uniform cells at time $T=20$, 
with polynomial degree $N=1$
on 10 uniform cells at time $T=200$, 
and with polynomial degree $N=2$
on 4 uniform cells at time $T=300$ are presented in 
Fig.~\ref{fig:ex1-p0}--\ref{fig:ex1-p2}, respectively.
One observes from these figures that
the dissipative behavior of method (U), whilst the method (C) exhibits
large phase error compared with method (A), which in turn is inferior to method (A*).

\begin{figure}[ht!]
 \caption{Numerical solution $u_h$ at time  $T=20$.
Solid line: numerical solution. Dashed line: exact solution.
$N=0$, 20 uniform cells.
% \\[-1ex]
}
 \label{fig:ex1-p0}
 \vspace{1ex}
 \includegraphics[width=.45\textwidth,height=.13\textheight]{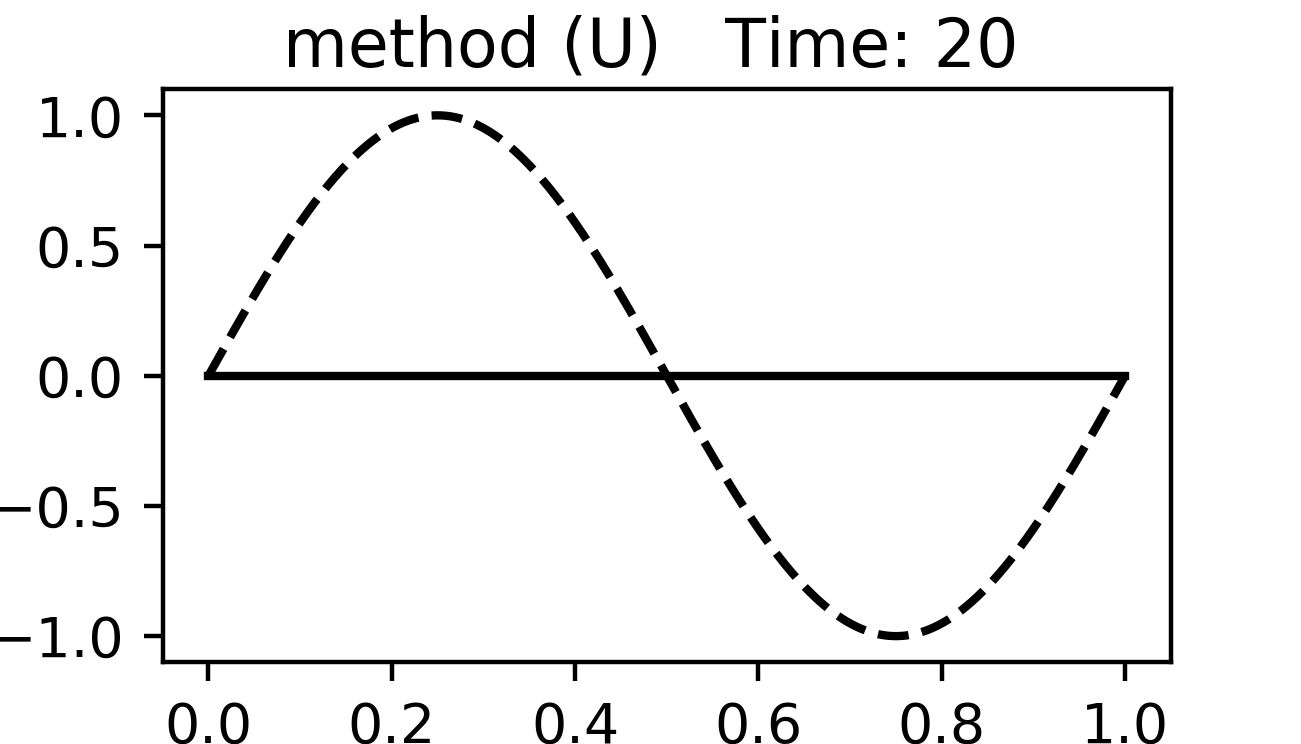}
 \includegraphics[width=.45\textwidth,height=.13\textheight]{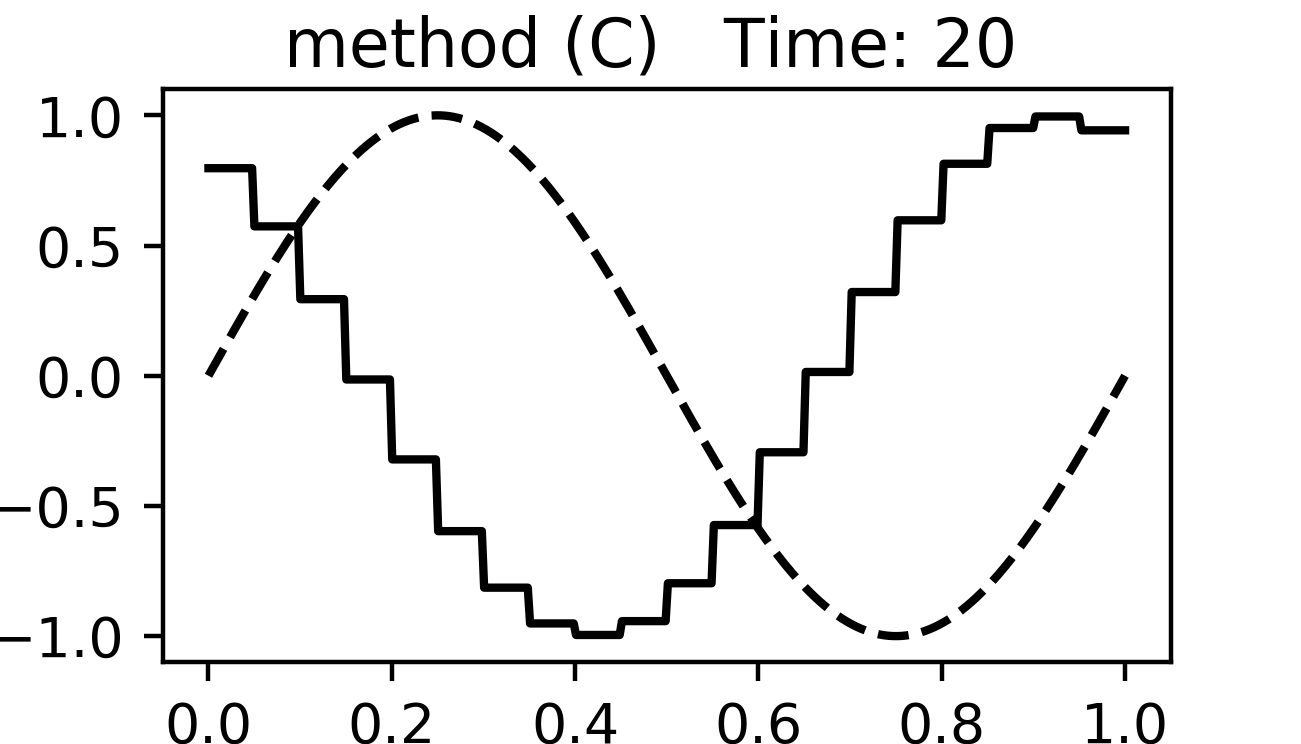}\\[1ex]
 \includegraphics[width=.45\textwidth,height=.13\textheight]{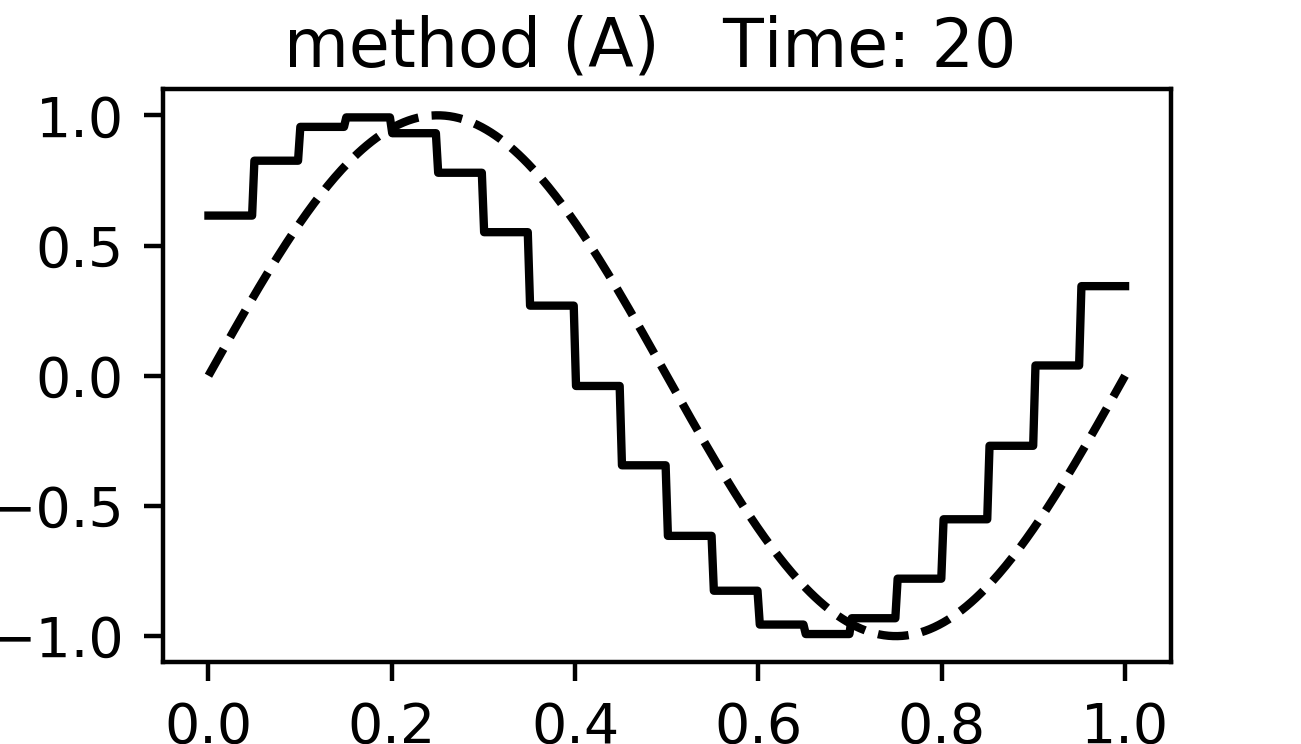}
 \includegraphics[width=.45\textwidth,height=.13\textheight]{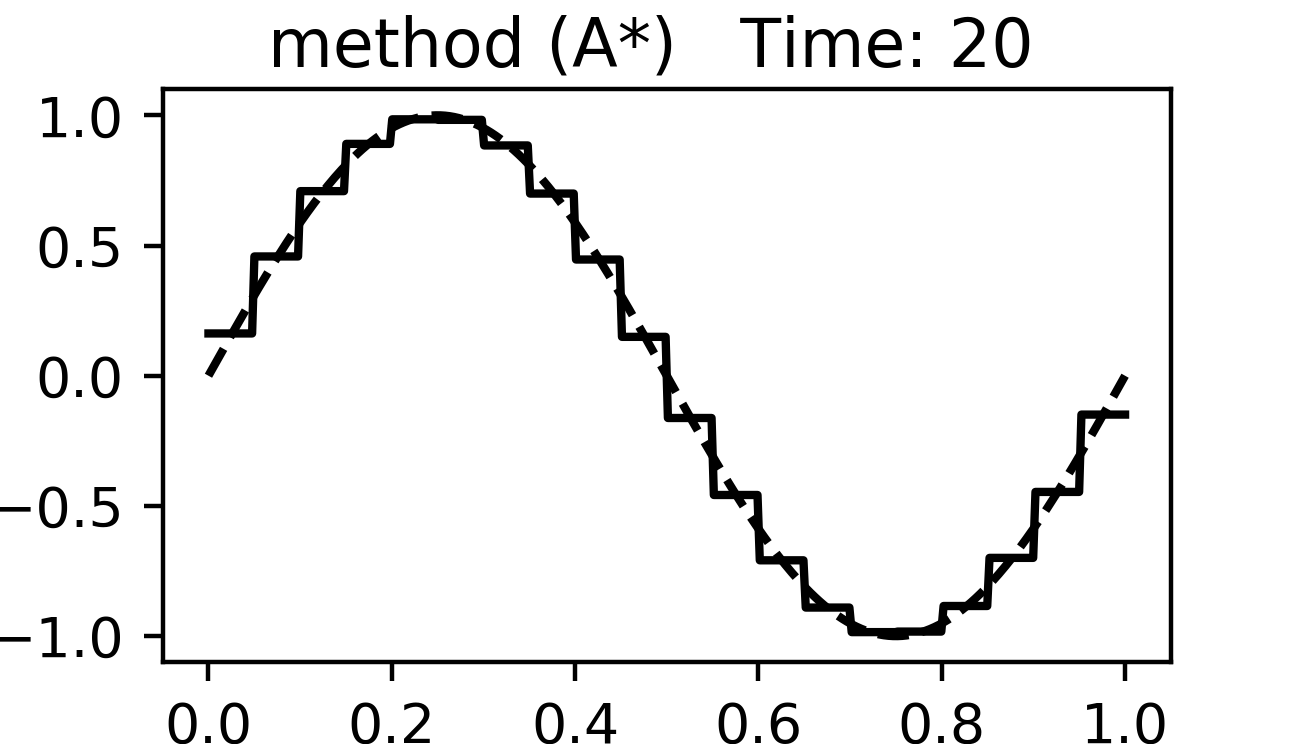} 
\end{figure}

\begin{figure}[ht!]
  \caption{Numerical solution $u_h$ at time $T=200$.
Solid line: numerical solution. Dashed line: exact solution.
$N=1$, 10 uniform cells.
% \\[-1ex]
}
 \label{fig:ex1-p1}
 \vspace{1ex}
 \includegraphics[width=.45\textwidth,height=.13\textheight]{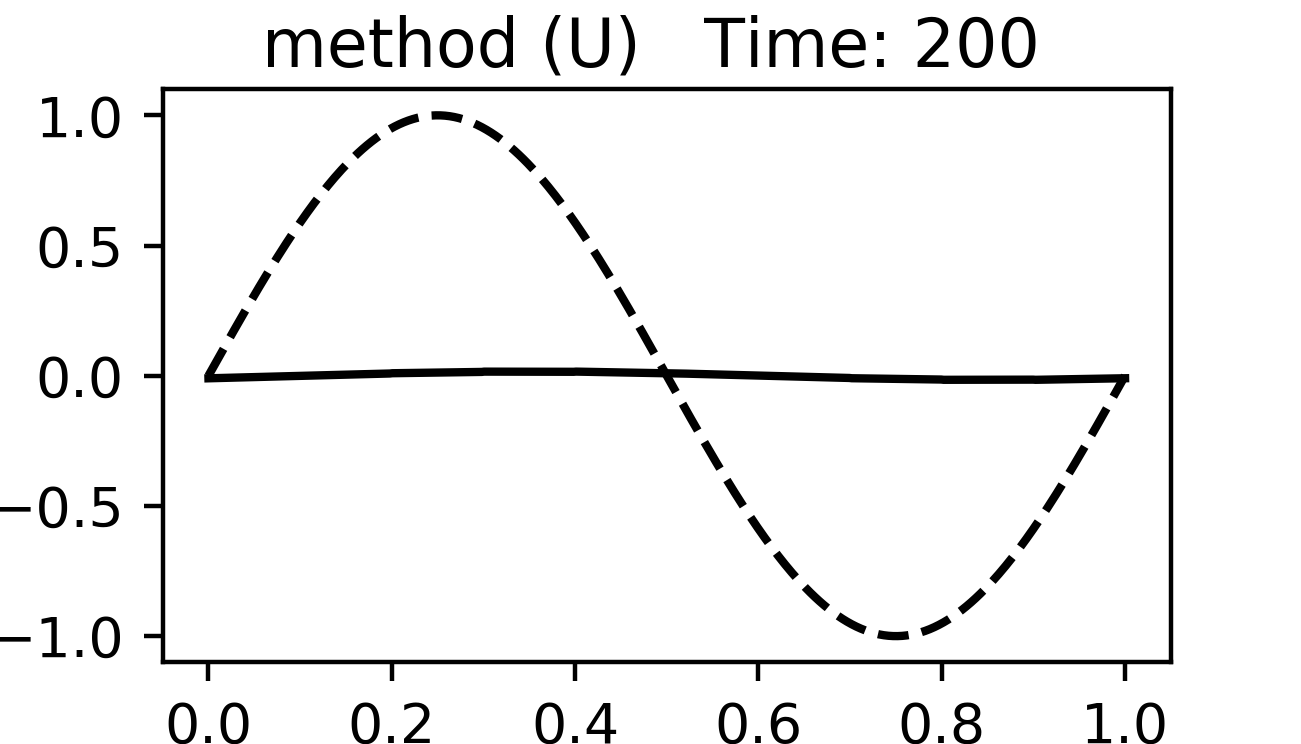}
 \includegraphics[width=.45\textwidth,height=.13\textheight]{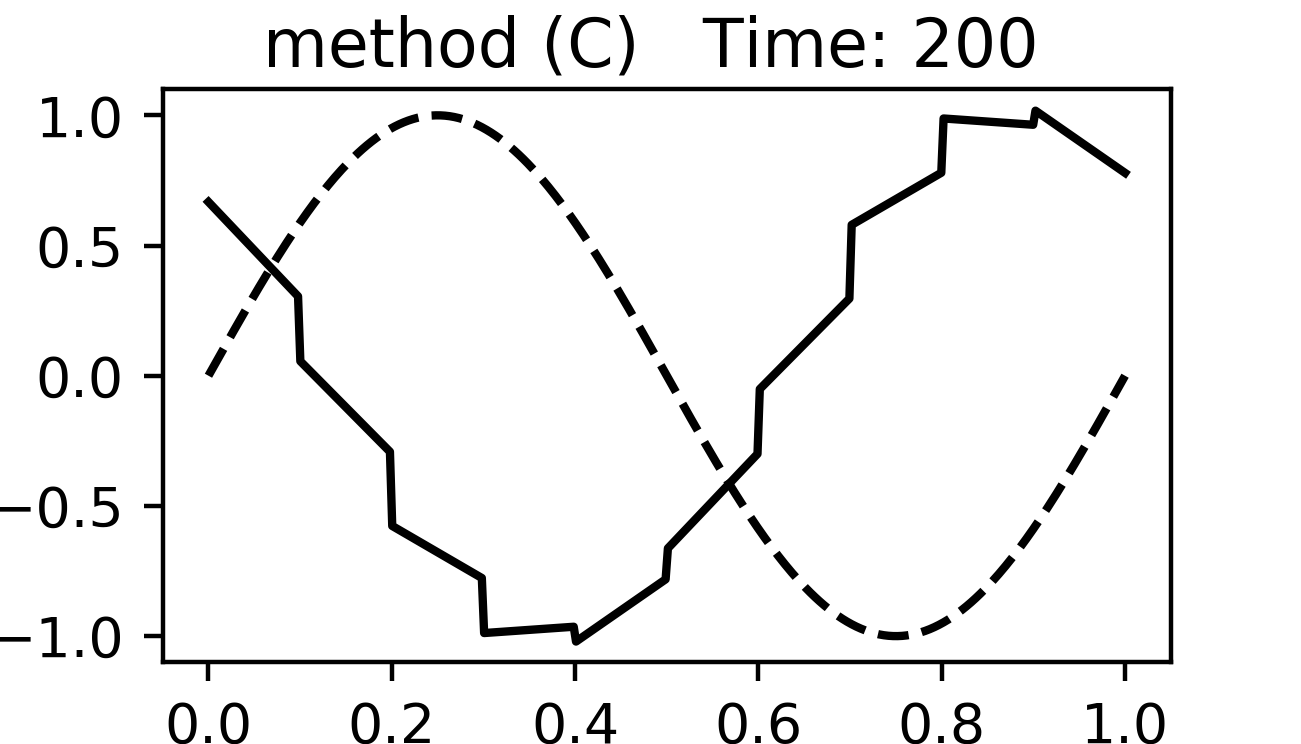}\\[1ex]
 \includegraphics[width=.45\textwidth,height=.13\textheight]{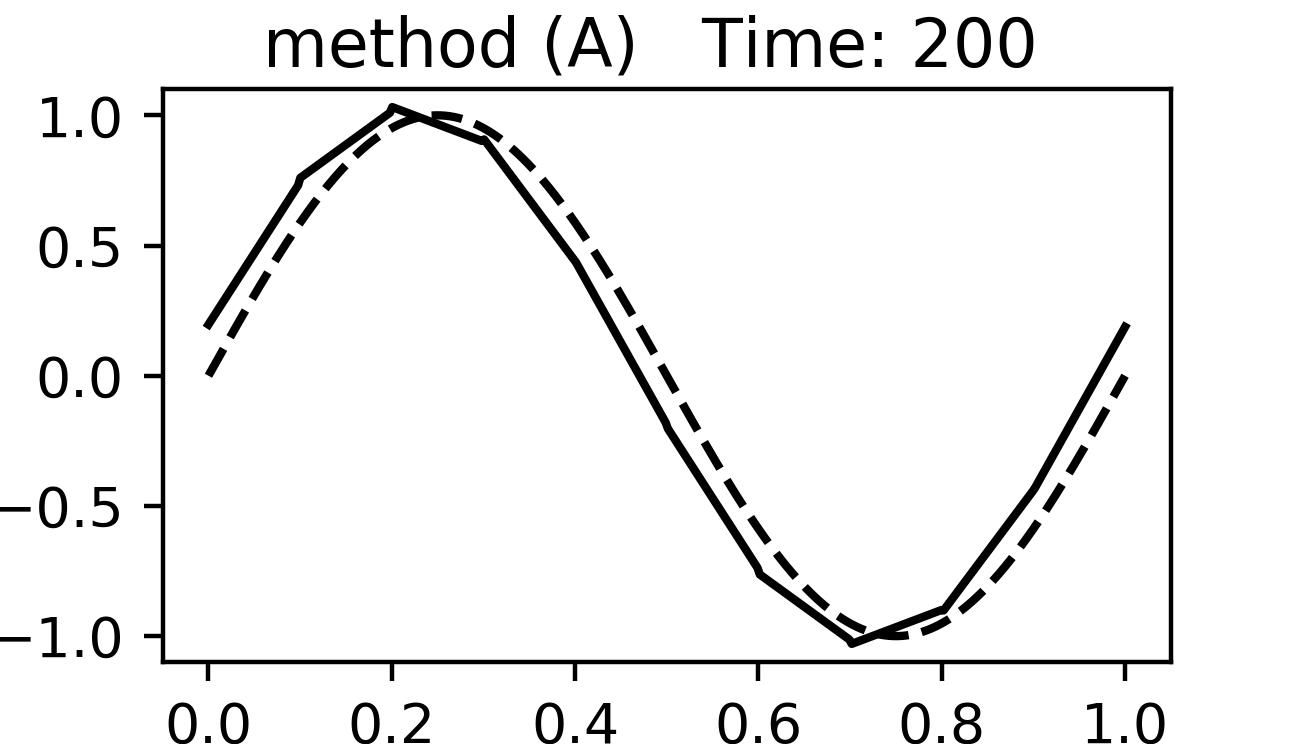}
 \includegraphics[width=.45\textwidth,height=.13\textheight]{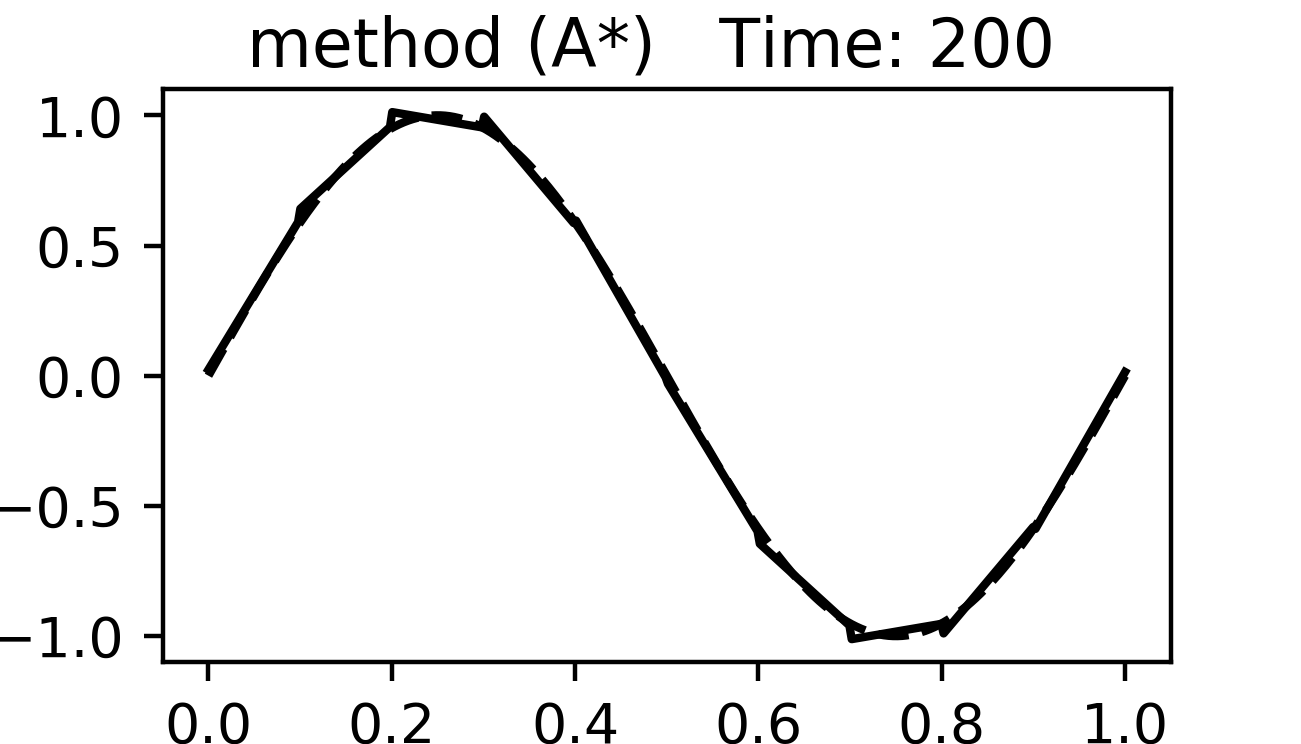} 
\end{figure}

\begin{figure}[ht!]
 \caption{Numerical solution $u_h$ at time  $T=300$.
Solid line: numerical solution. Dashed line: exact solution.
$N=2$, 4 uniform cells.
% \\[-1ex]
}
 \label{fig:ex1-p2}
 \vspace{1ex}
 \includegraphics[width=.45\textwidth,height=.13\textheight]{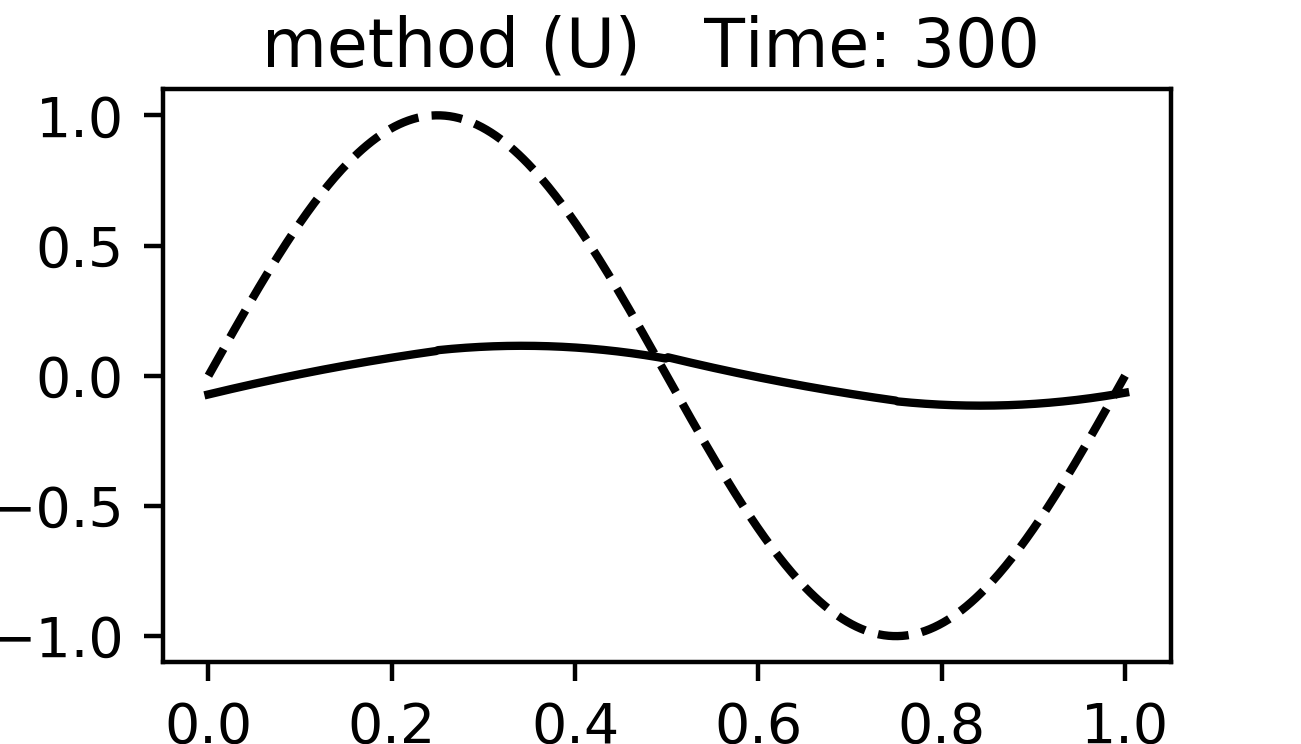}
 \includegraphics[width=.45\textwidth,height=.13\textheight]{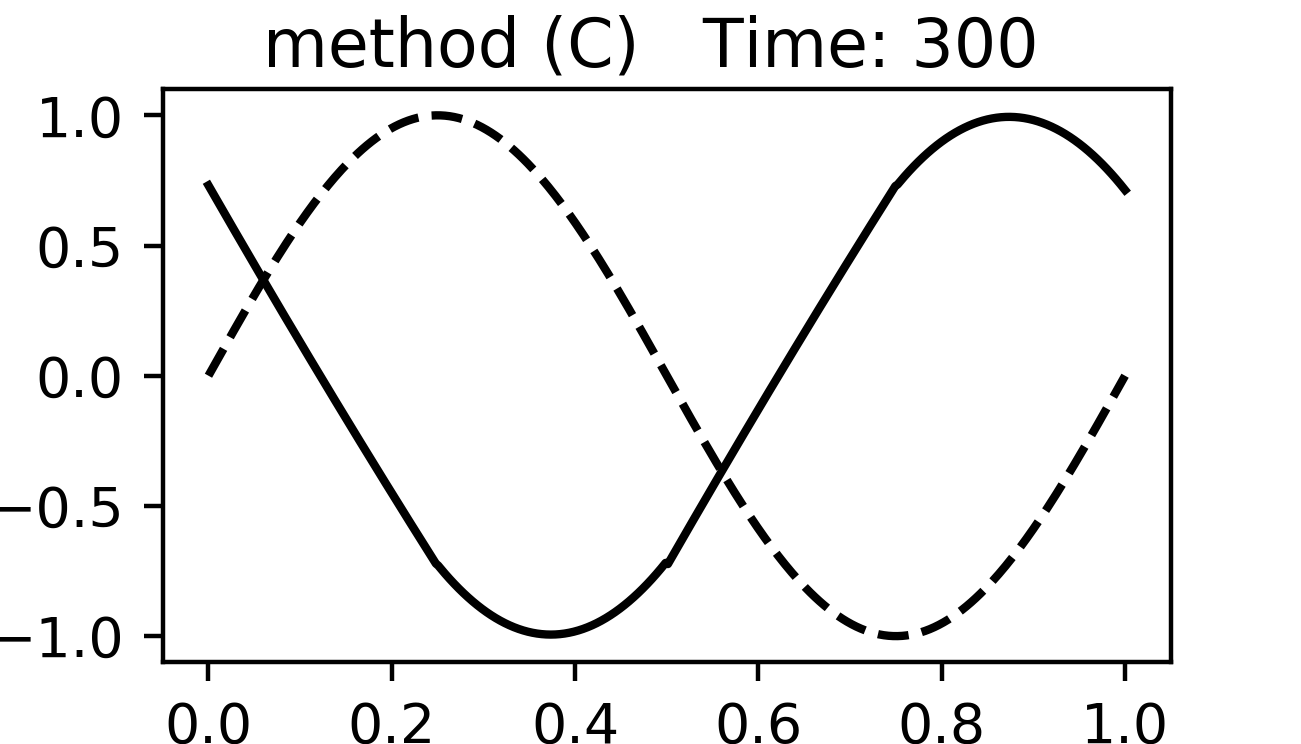}\\[1ex]
 \includegraphics[width=.45\textwidth,height=.13\textheight]{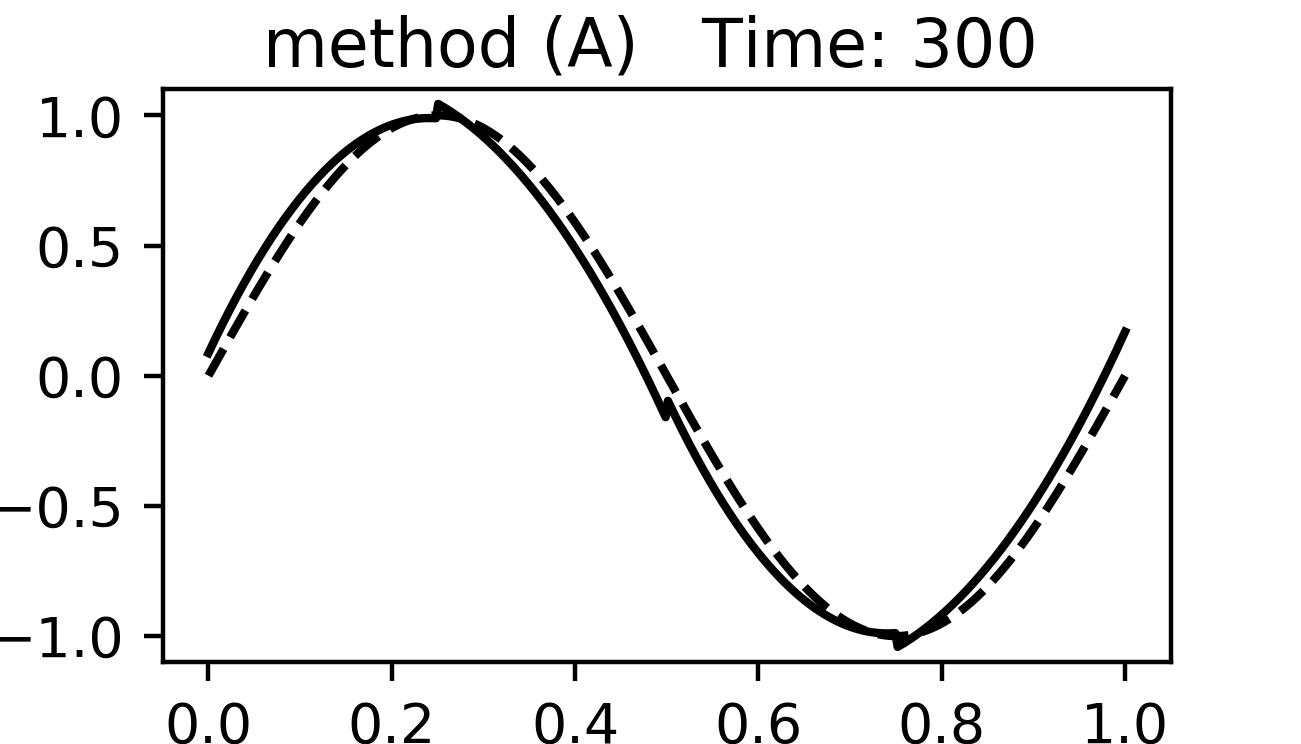}
 \includegraphics[width=.45\textwidth,height=.13\textheight]{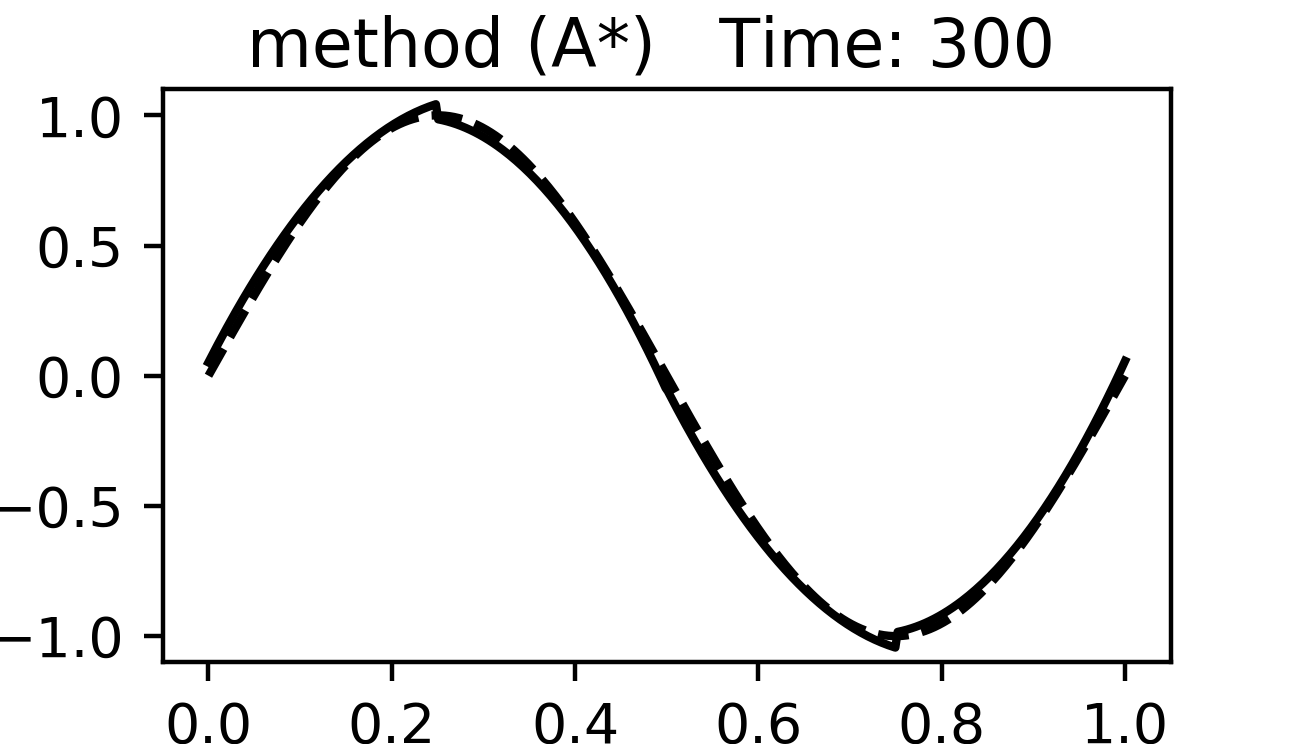} 
\end{figure}

For the $N=0$ case, we also compare the numerical approximations obtained at {\it different} times 
for the four methods
% methods (C), (A), and (A*) 
in Fig.~\ref{fig:ex1-diff}. 
It is striking that method (A*) at time $T=1500$ enjoys a similar 
accuracy to that of method (C) at time $T=5$ and method (A) at time $T=20$.
In Section \ref{sec:main} we will give a theoretical explanation for these observations.
% Such superior performance of the method (A*) will be explained in Section \ref{sec:main}.
\begin{figure}[ht!]
 \caption{Numerical solution $u_h$ at {\it different}  times.
Solid line: numerical solution. Dashed line: exact solution.
$N=0$, 20 uniform cells.
}
 \label{fig:ex1-diff}
 \vspace{1ex}
  \includegraphics[width=.45\textwidth,height=.13\textheight]{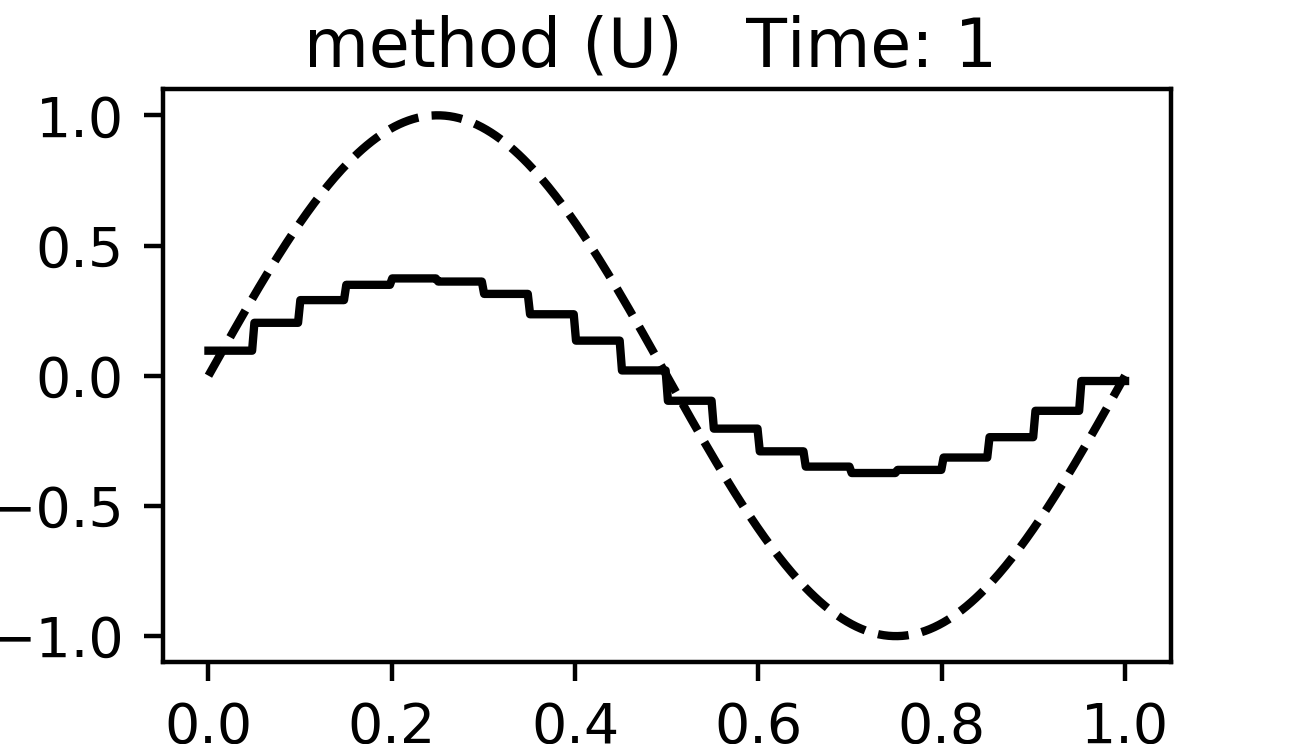}
 \includegraphics[width=.45\textwidth,height=.13\textheight]{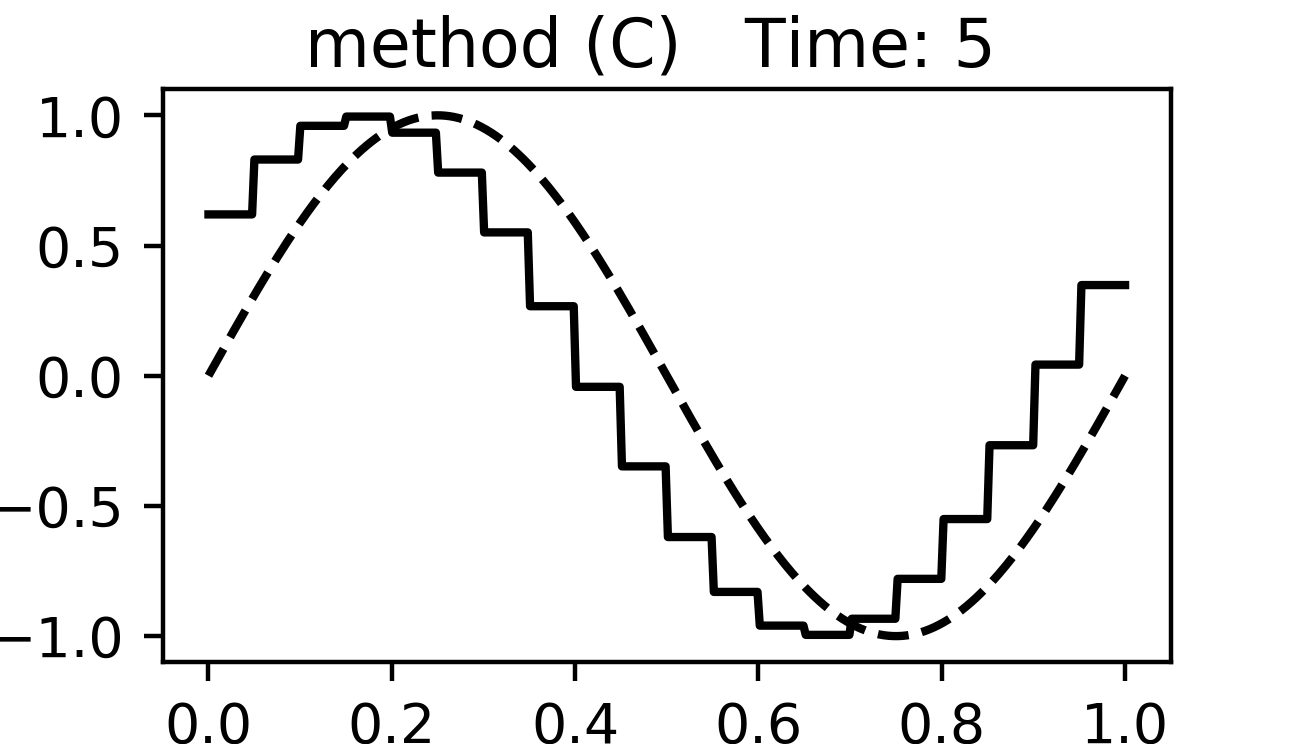}\\[1ex]
 \includegraphics[width=.45\textwidth,height=.13\textheight]{pwTrueauxp020.png}
 \includegraphics[width=.45\textwidth,height=.13\textheight]{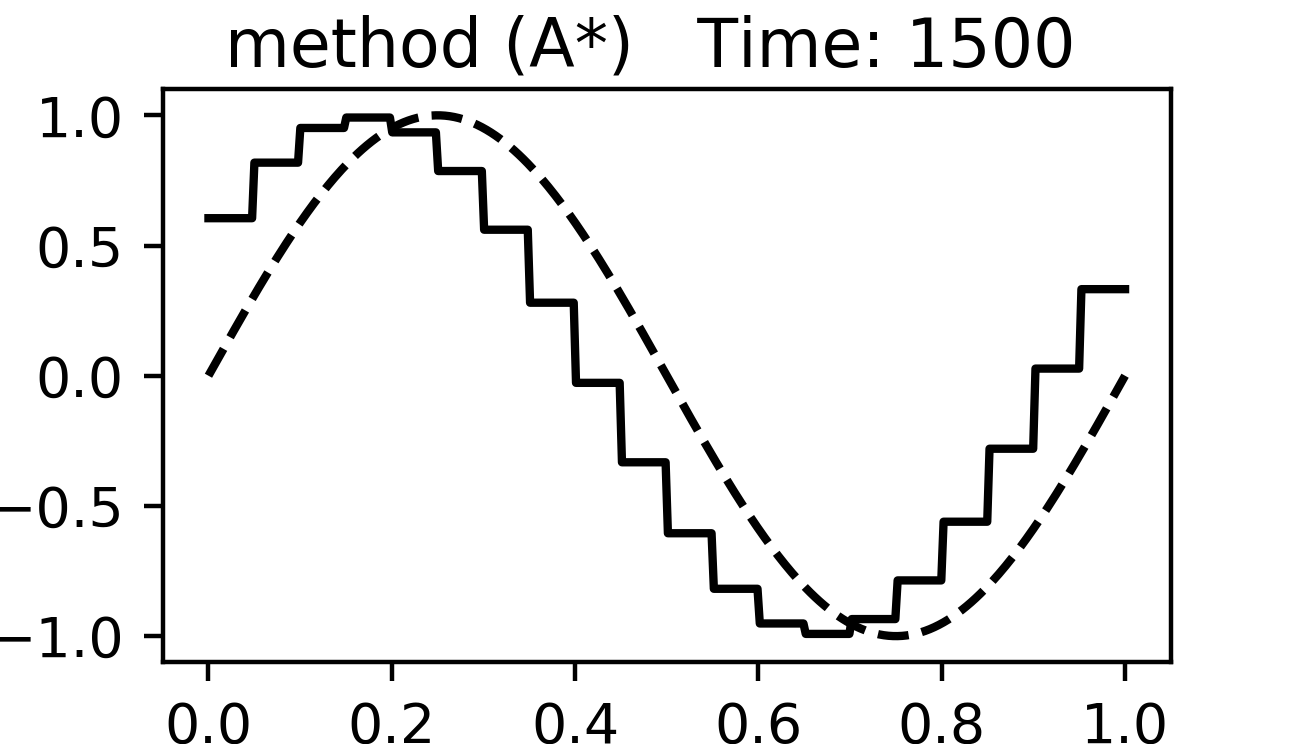}
\end{figure}

We also compare the numerical approximations obtained at time $T=40$ using
% We shall not present 
% a detailed dispersion analysis on non-uniform meshes.
% However, we present numerical results at time $40$ for 
methods (A) and (A*) 
for $N=0$ on uniform and non-uniform meshes consisting of 20 cells in Fig.~\ref{fig:ex1-non1} and 
Fig.~\ref{fig:ex1-non2}, respectively. 
The non-uniform mesh is obtained by applying a uniformly distributed 
10\% random perturbation of the nodes in an 
uniform mesh. 
Comparing the results on the uniform mesh with the corresponding results on the non-uniform mesh, 
we observe a similar phase error in the physical variable $u_h$ in both cases. However, in the non-uniform case we observe
a larger amount of energy leakage from the physical variable $u_h$ to the auxiliary variable $\phi_h$ 
for both methods 
(A) and (A*), which is larger for method (A*).

We mention that the results presented in Fig.~\ref{fig:ex1-diff}-\ref{fig:ex1-non2} 
are not  peculiar to the lowest order case and 
numerical evidence (not reported in this article) indicate 
a similar behavior on both uniform and non-uniform meshes for $N=1$ and $N=2$.

% based on numerical results not reported here, 
% we observe a similar performance of the method (A), with a fixed polynomial degree $N$ and 
% fixed number of cells, on uniform and randomly perturbed meshes, but a 
% drastic performance deterioration for the method (A*) on non-uniform mesh over uniform meshes.
% We also observe that, again based on numerical results not reported here, 
% results for methods (A) and (A*) on randomly perturbed meshes are quite similar to 
% each other for higher order cases ($N\ge 1$) both in terms of phase error and energy leakage. 
% Hence, it seems we can always use the ``more natural'' method (A).
\begin{figure}[ht!]
 \caption{Numerical solution at time $T=40$ for methods (A).
 Left: $u_h$; Right: $\phi_h$.
Solid line: numerical solution. Dashed line: exact solution.
First row: uniform mesh; Second row: non-uniform mesh.
$N=0$, 20 cells.
}
 \label{fig:ex1-non1}
 \vspace{1ex}
 \includegraphics[width=.45\textwidth,height=.13\textheight]{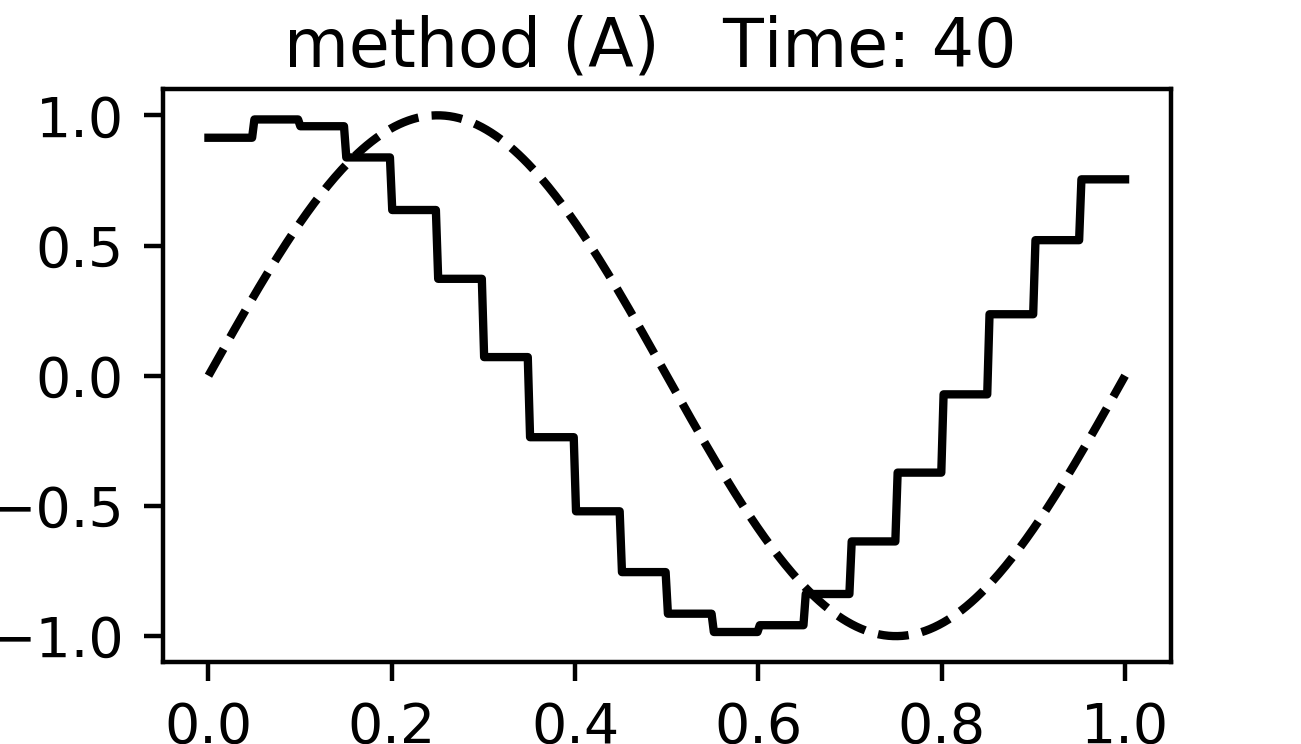}
 \includegraphics[width=.45\textwidth,height=.13\textheight]{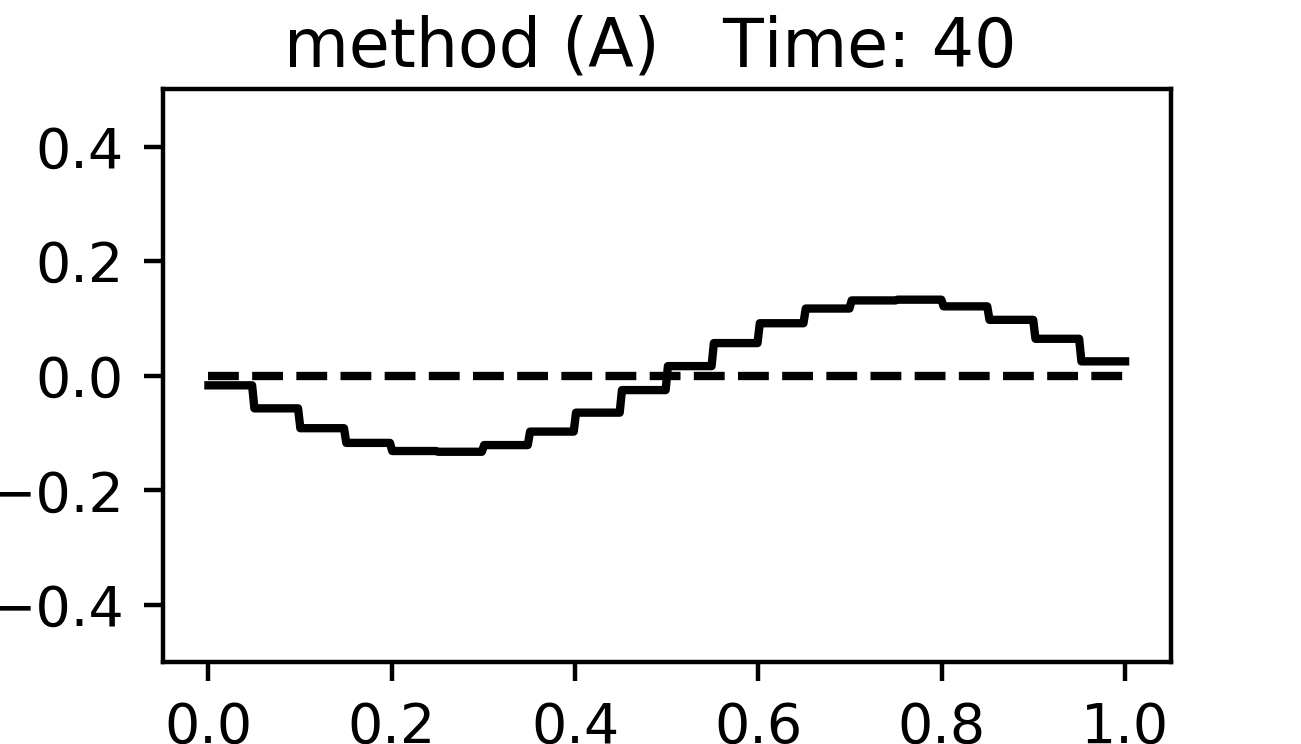}\\[1ex]
 \includegraphics[width=.45\textwidth,height=.13\textheight]{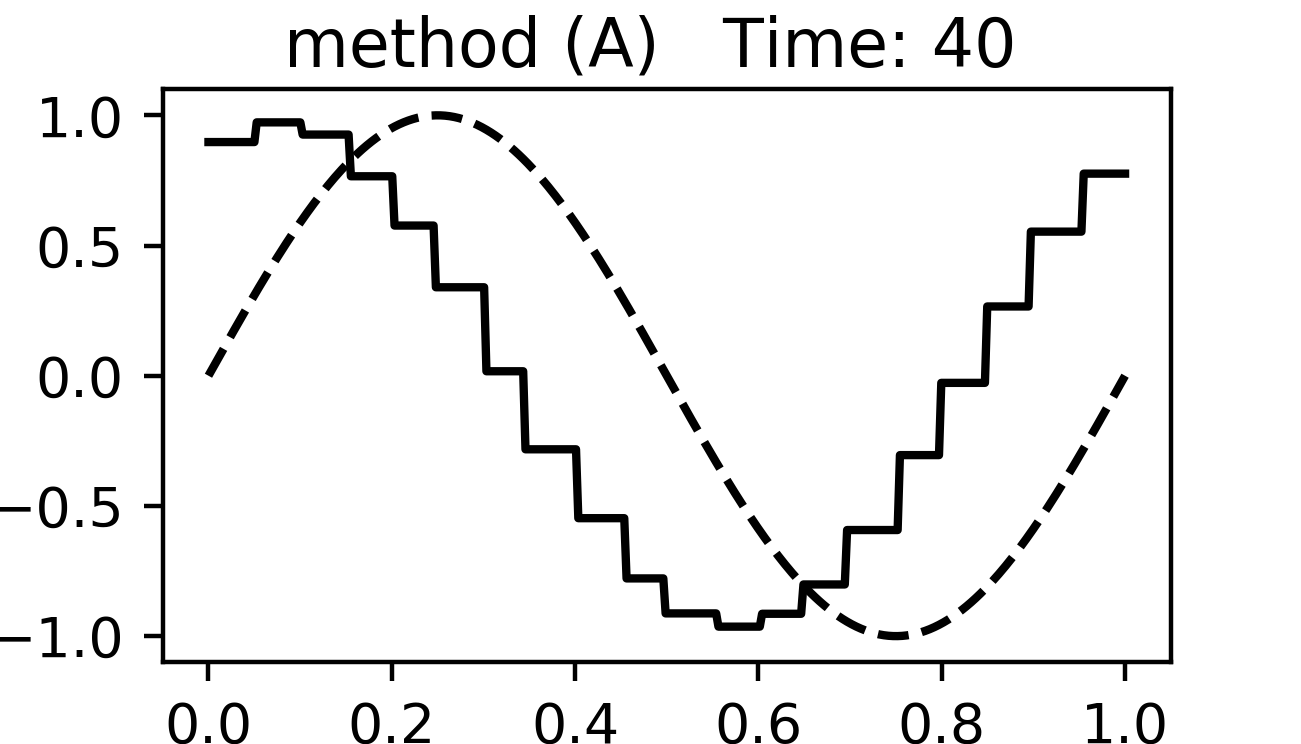}
 \includegraphics[width=.45\textwidth,height=.13\textheight]{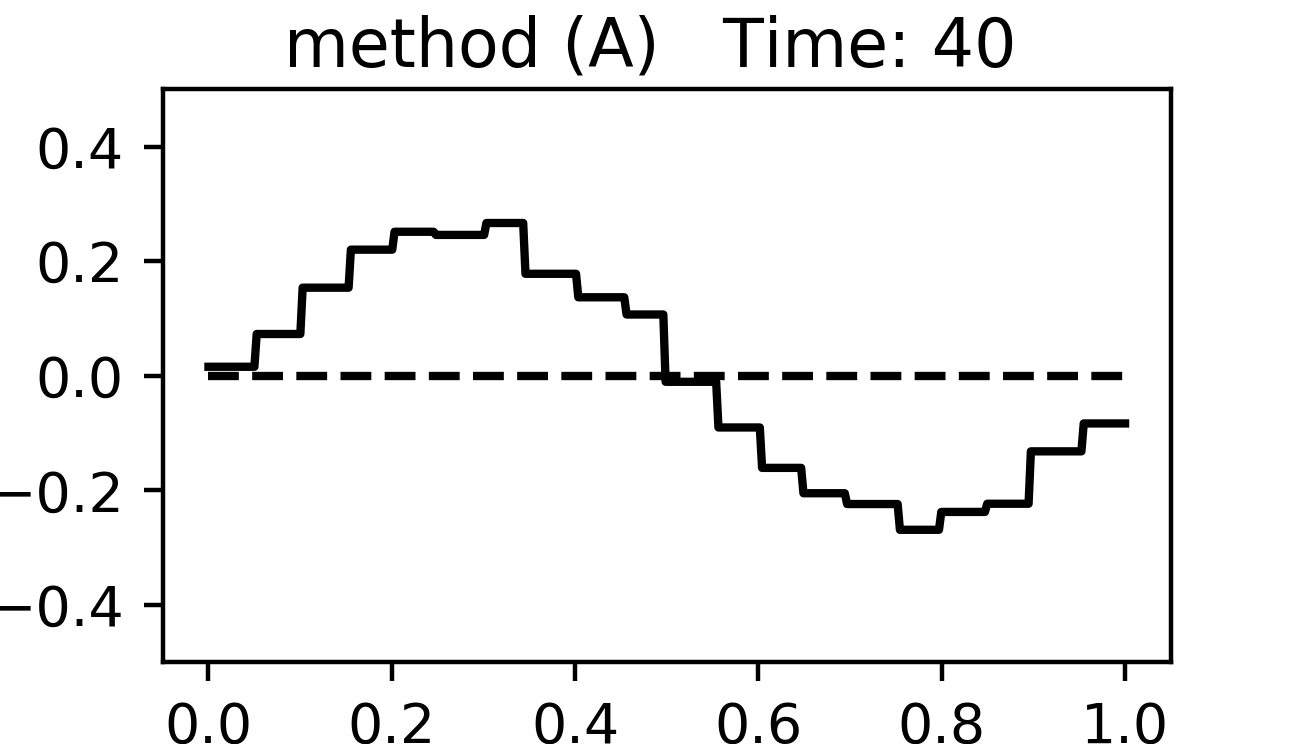}
\end{figure}

\begin{figure}[ht!]
 \caption{Numerical solution at  time $T=40$ for method (A*).
 Left: $u_h$; Right: $\phi_h$.
Solid line: numerical solution. Dashed line: exact solution.
First row: uniform mesh; Second row: non-uniform mesh.
$N=0$, 20 cells.
}
 \label{fig:ex1-non2}
 \vspace{1ex}
  \includegraphics[width=.45\textwidth,height=.13\textheight]{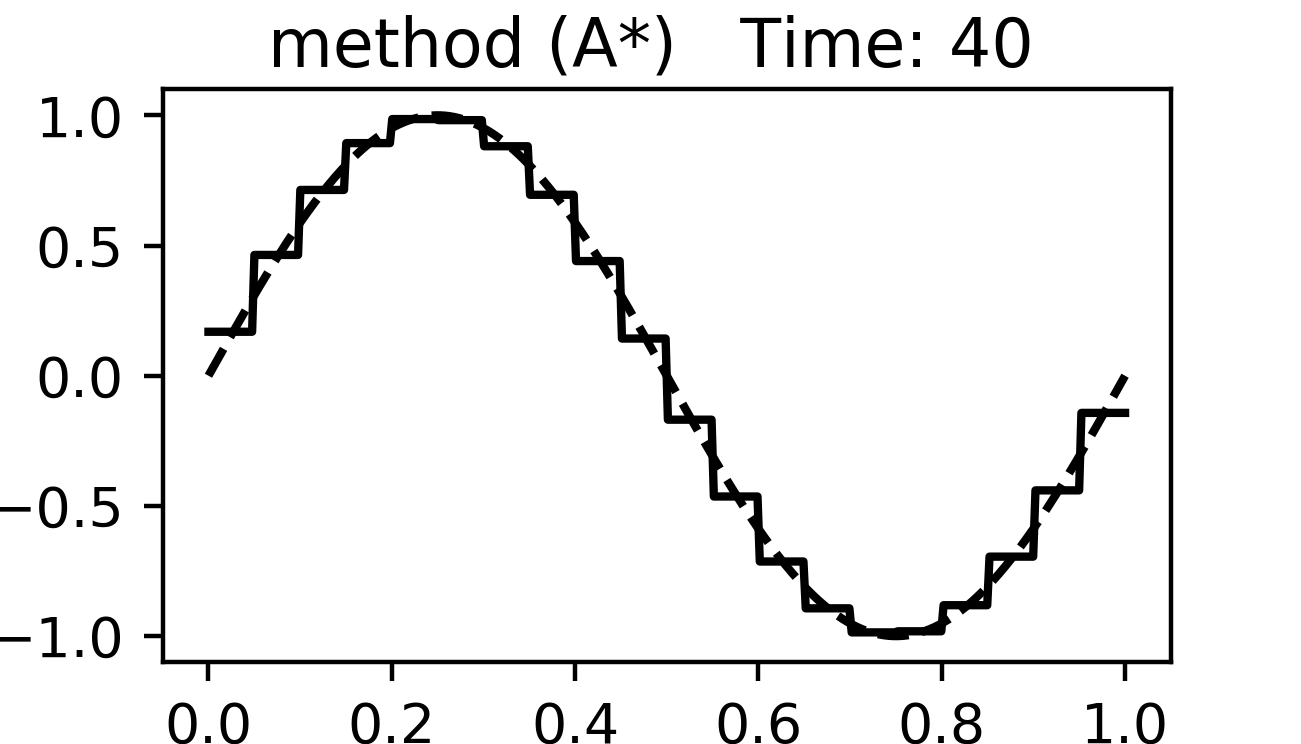}
 \includegraphics[width=.45\textwidth,height=.13\textheight]{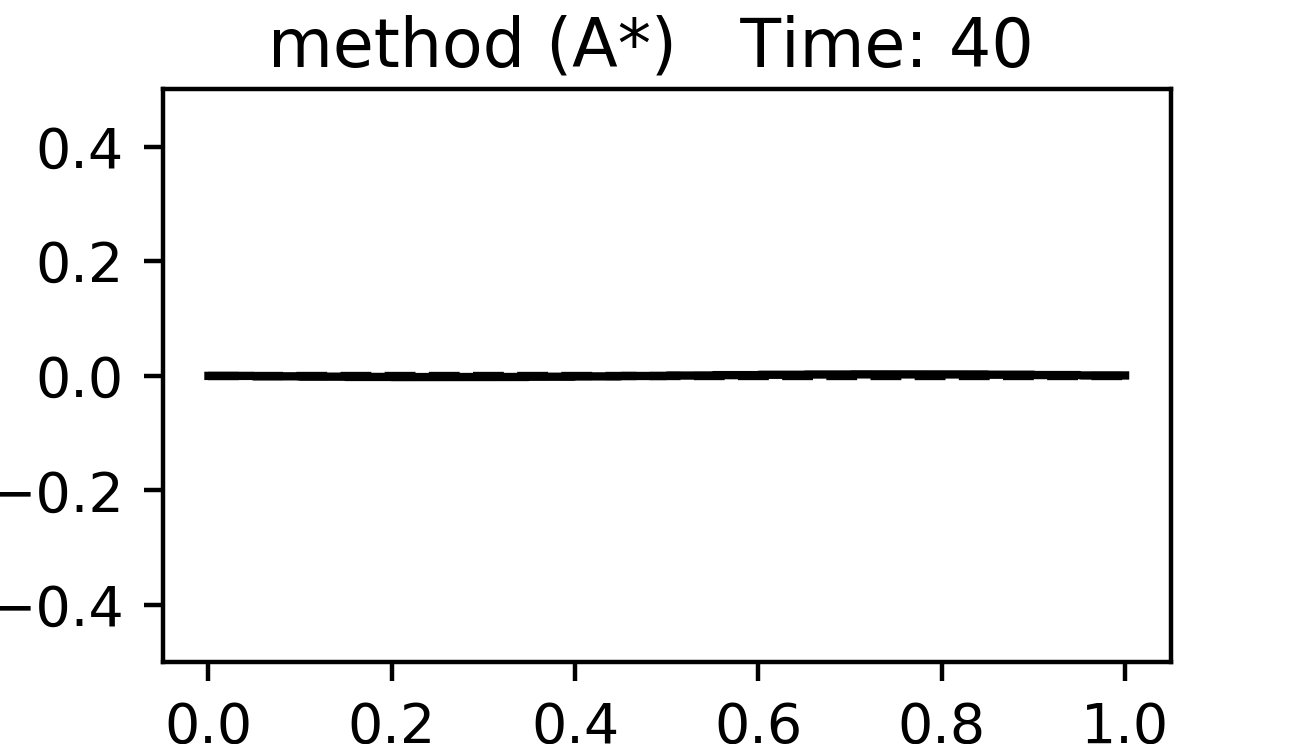}\\[1ex]
  \includegraphics[width=.45\textwidth,height=.13\textheight]{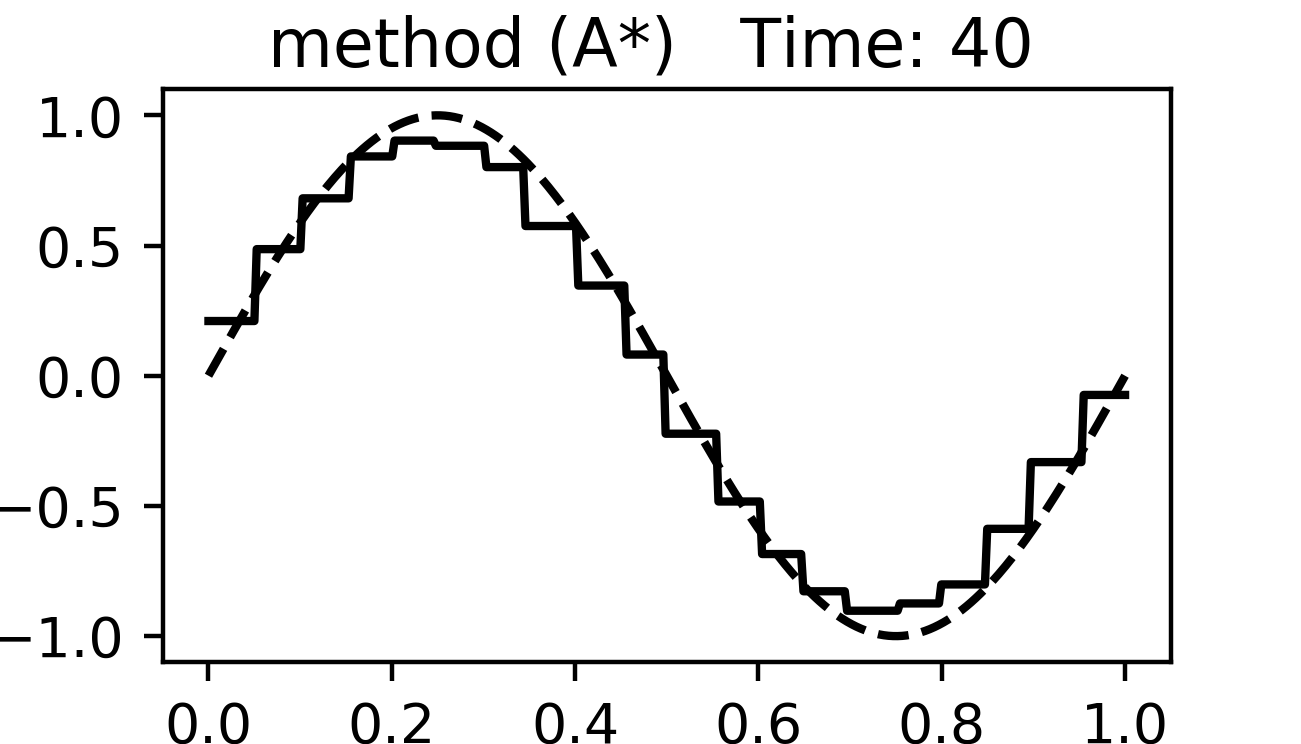}
 \includegraphics[width=.45\textwidth,height=.13\textheight]{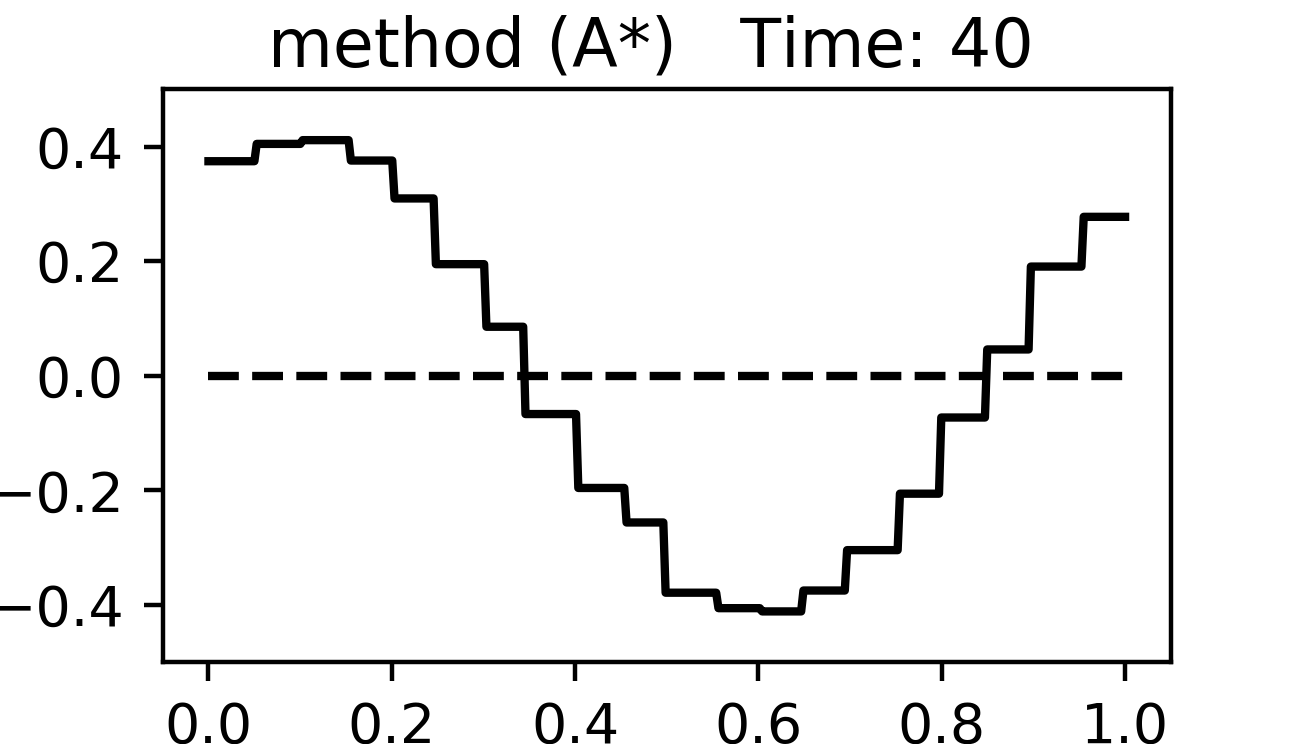}
\end{figure}

\section{Main results on the dispersion analysis}
\label{sec:main}
In this section we provide a theoretical explanation for the improved 
dispersive behavior of methods (A) and (A*) compared with methods (U) and (C).

A key feature of the equations \eqref{aux-adv} 
is the existence of non-trivial, spatially propagating solutions for each given 
temporal frequency $\omega$,
\begin{align}
 \label{uex}
 u(x,t) = \mathrm{e}^{-i\omega t}U(x),\quad\quad
  \phi(x,t) = \mathrm{e}^{-i\omega t}\Phi(x),
\end{align}
where $U(x) = \mathrm{e}^{ikx}$ and $\Phi(x) = \mathrm{e}^{-ikx}$ with $k = \omega$ the wavenumber.
The functions
$U$ and $\Phi$ satisfies a
{\it Bloch-wave condition} 
\begin{align}
 \label{bloch}
 U(x+h) = \lambda^+ U(x), \quad 
 \Phi(x+h) = \lambda^- \Phi(x),\quad x\in\mathbb{R},\, h\in \mathbb{R},
\end{align}
where $\lambda^\pm = \mathrm{e}^{\pm ikh}$ are the Floquet multipliers.

\subsection{The main results}
% A key issue  \cite{Cohen02} when assessing any spatial
% discretisation scheme for the one-way wave equation is 
% The existence of non-trivial Bloch wave 
% solutions of the discrete problem \eqref{scheme:adv1d} can be used \cite{Cohen02} 
In order to study the dispersive behavior of the discrete schemes, we 
seek the non-trivial {\it discrete} Bloch wave solutions of the DG scheme \eqref{scheme:adv1d} in the form
% \eqref{bloch},
\begin{align}
\label{uhex}
u_{h,N}(x,t) = \mathrm{e}^{-i\omega t}U_{h,N}(x),\quad\quad
\phi_{h,N}(x,t) = \mathrm{e}^{-i\omega t}\Phi_{h,N}(x),
\end{align}
where
% the essential difference that the functions
$U_{h,N}, \Phi_{h,N} \in V_{h}^N$ satisfy a discrete Bloch wave condition
\begin{align}
\label{bloch-h}
 U_{h,N}(x+h) = \lambda_{h,N} U_{h,N}(x), \; 
 \Phi_{h,N}(x+h) = \lambda_{h,N} \Phi_{h,N}(x),\; x\in\mathbb{R},\, h\in \mathbb{R},
\end{align}
and where $\lambda_{h,N}$ is the discrete Floquet multiplier.
% In the dispersion analysis, one measures the 
% % The ability of the numerical scheme to propagate waves in space faithful to the
% % true propagating waves depends on 
% the accuracy with which the discrete Floquet
% multiplier approximates the true Floquet multiplier. 

% Assuming $\lambda_{h,N}$ approximate the 
% Floquet multiplier $\lambda^+$, 
The relative accuracy
$R_{h,N}$ of the Floquet multiplier
approximation is defined by
\begin{align}
 \label{err-rel}
 R_{h,N} = \frac{\lambda^+-\lambda_{h,N}}{\lambda^+} = 
 \frac{\mathrm{e}^{ik h}-\lambda_{h,N}}{\mathrm{e}^{ik h}}.
\end{align}

The leading order terms in $R_{h,N}$ for each of the four DG methods described in Section \ref{sec:intro}
are listed in Table \ref{table1}. The results quoted for the methods (U) and (C) are 
special cases of the general 
result proved in  \cite[Theorem 2]{Ainsworth04}, whilst the results for the method (A) are
special cases of the general result that will be proved here in Theorem \ref{thm:disp} and Remark \ref{rk:asym}.
% in Section \ref{sec:disp} below.
The results for the method (A*) were obtained using algebraic manipulation for particular choices 
of polynomial degree $N$ from $0$ up to degree $17$.

The results given in 
Table \ref{table1} 
show that the accuracy of method (A) is of $(2N+3)$-th order in $\omega h$ and, as such, 
is always superior to the accuracy of methods (U) and (C) both in terms of the order of convergence and the magnitude of the coefficient of the
leading term in the error. 
The method (A*) is better still, providing  
$(2N+5)$-th order of convergence in $\omega h$.

% We note that the methods (U) is dissipative since the leading term of the error is real positive, 
% whilst the methods (C), (A), and (A*) are non-dissipative 
% since $|\lambda_{h,N}|=1$ for small enough mesh size $h$, which is proven for the method (C) in \cite{Ainsworth04}, 
% for the method (A) in Remark \ref{rk:disp}, and for the method (A*) in Remark \ref{rk:disp2}.

\begin{table}[htbp]
\caption{ \label{table1}
Leading terms of the relative error $R_{h,N}$ in the approximation of the Floquet multiplier for the DG methods
(U), (C), (A), and (A*).
$\Omega=\omega h$.
$\mathsf{C_N} = \frac12\left[\frac{N!}{(2N+1)!}\right]^2$ for $N\ge 1$.
$E_N$, up to four digits accuracy, is given in Table \ref{table-e} for $N\le 17$.
} \centering
% {
% \begin{tabular}{ccccc}
% \noalign{\smallskip}
% \hline 
% \noalign{\smallskip}
% \backslashbox{method}{degree}
% & 0 & 1 & 2 & $N\ge 1$\\
% % Degree &{method (U)}&{method (C)}&{method (A)}&{method (A$\!^*$)}\\
% \noalign{\smallskip}
%  \hline
% \noalign{\smallskip}
% {(U)}&
% $\frac{\Omega^2}{2}+i\frac{\Omega^3}{3}$& 
% $\frac{\Omega^4}{72}+i\frac{\Omega^5}{270}$& 
% $\frac{\Omega^6}{7200}+i\frac{\Omega^7}{42,000}$& 
% $\mathsf{C_N}\!\!\left[\!1+i\frac{(2N+2)\Omega}{(2N+1)(2N+3)\!}\right]
% \Omega^{2N+2}$\\[2ex]
% {(C)}&
% $-i\frac{\Omega^3}{6}$ & 
% $i\frac{\Omega^3}{48}$ &
% $-i\frac{\Omega^7}{16,800}$
% & 
% $i\mathsf{C_N}\left\{\begin{tabular}{ll}
%              $-\frac{N+1}{2N+3}\Omega^{2N+3},$&\!\!\!\!\!$N$ even\\[.6ex]
%              $\frac{2N+1}{N+1}\Omega^{2N+1},$&\!\!\!\!\!$N$ odd
%             \end{tabular}\right.
% $\\[2ex]
% {(A)}&
% $-i\frac{\Omega^3}{24}$ & 
% $-i\frac{\Omega^5}{1,080}$ &
% $-i\frac{\Omega^7}{252,000}$ &
% $-i\mathsf{C_N}\frac{\Omega^{2N+3}}{(2N+1)(2N+3)}
% $
% \\[2ex]
% {(A*)}&
% $-i\frac{\Omega^5}{180}$& 
% $-i\frac{53\,\Omega^7}{302,400}$& 
% $-i\frac{41\,\Omega^9}{63,504,000}$&
% $-iE_N\frac{{\Omega}^{2N+5}}{(2N+1)^{2N+2}}$
% \\
% \noalign{\smallskip}
%  \hline
% \end{tabular}
% }
% \begin{adjustbox}{angle=90}
% \scalebox{0.7}
\resizebox{\textwidth}{!}{
\begin{tabular}{ccccc}
\noalign{\smallskip}
\hline 
\noalign{\smallskip}
\large Degree &\large{method (U)}&\large{method (C)}&\large{method (A)}&\large{method (A$\!^*$)}\\
\noalign{\smallskip}
 \hline
\noalign{\smallskip}
\LARGE 0 & {\LARGE$\frac{\Omega^2}{2}+i\frac{\Omega^3}{3}$}& \LARGE
$-i\frac{\Omega^3}{6}$
&\LARGE
$-i\frac{\Omega^3}{24}$
&\LARGE
$-i\frac{\Omega^5}{180}$
\\[2ex]\LARGE
1 & \LARGE$\frac{\Omega^4}{72}+i\frac{\Omega^5}{270}$& 
\LARGE$i\frac{\Omega^3}{48}$
&\LARGE
$-i\frac{\Omega^5}{1,080}$
&\LARGE
$-i\frac{53\,\Omega^7}{302,400}$
\\[2ex]
\LARGE2 &\LARGE $\frac{\Omega^6}{7200}+i\frac{\Omega^7}{42,000}$& 
\LARGE$-i\frac{\Omega^7}{16,800}$
&
\LARGE$-i\frac{\Omega^7}{252,000}$
&
\LARGE$-i\frac{41\,\Omega^9}{63,504,000}$
\\[2ex]
{$N\ge 1$} & 
% $\frac{\Omega^{2N+2}}{2}\left[\frac{N!}{(2N+1)!}\right]^2
% $
{
$\mathsf{C_N}\!\!\left[\!1+i\frac{(2N+2)\Omega}{(2N+1)(2N+3)\!}\right]
\Omega^{2N+2}$}
& 
{
$i\mathsf{C_N}\left\{\!\!\begin{tabular}{ll}
             $-\frac{N+1}{2N+3}\Omega^{2N+3},$&\!\!\!\!\!$N$ even\\[.6ex]
             $\frac{2N+1}{N+1}\Omega^{2N+1},$&\!\!\!\!\!$N$ odd
            \end{tabular}\right.
$}
&
{
$-i\frac{\mathsf{C_N}\,\Omega^{2N+3}}{(2N+1)(2N+3)}
$}
&
{
$-i\frac{E_N\,{\Omega}^{2N+5}}{(2N+1)^{2N+2}}$}
\\
\noalign{\smallskip}
 \hline
\end{tabular}
}
% \end{adjustbox}
\end{table}
% 3 & $\frac{\Omega^8}{1,411,200}+i\frac{\Omega^9}{11,113,200}$& 
% $i\frac{\Omega^7}{806,400}$
% &
% $-i\frac{\Omega^9}{88,905,600}$
% &
% $-i\frac{37\,\Omega^{11}}{22,004,136,000}$
% \\[2ex]
% 4 & $\frac{\Omega^{10}}{457,228,800}+i\frac{\Omega^{11}}{4,526,565,120}$& 
% $-i\frac{\Omega^{11}}{1,005,903,360}$
% &
% $-i\frac{\Omega^{11}}{45,265,651,200}$
% &
% $-i\frac{79\,\Omega^{13}}{24,165,822,320,640}$
% \\[2ex]

Let $\mathrm{Re}(\cdot)$ and $\mathrm{Im}(\cdot)$ be the real and imaginary 
parts of a complex number, respectively.
We now examine the dissipation and dispersion errors of the schemes in the low-wavenumber limit where  $kh \ll 1$.
Let $k_{h,N}$ be the {\it discrete wavenumber} that satisfy
\begin{align}
\label{dwave}
 \mathrm{e}^{ik_{h,N}h} = \lambda_{h,N},\quad 
\mathrm{Re}(k_{h,N}h)\in [-\pi,\pi],
\end{align}
which approximates the true wavenumber $k$.
For $kh\ll 1$, the relative error satisfies
% In the case where $k-k_{h,N}$ is small, one has as follows
\begin{align}
\label{r-e}
R_{h,N}= \frac{\mathrm{e}^{ik h}- \mathrm{e}^{ik_{h,N}h}}{\mathrm{e}^{ik h}}
\approx i(k-k_{h,N})h.
% =1-\mathrm{e}^{i(k_{h,N}-k)h}
% \approx i(k-k_{h,N})h = ikhE_{h,N}. 
\end{align}
Hence, Table \ref{table1} shows that the {\it dispersion error} is 
\[
 \mathrm{Re}\left((k-k_{h,N})h\right)\approx \left\{
 \begin{tabular}{ll}
$ \frac{\mathsf{C_N}(2N+2)}{(2N+1)(2N+3)}(hk)^{2N+3}$,&  for method (U),\\[2ex]
$ -\frac{\mathsf{C_N}(N+1)}{2N+3}(hk)^{2N+3}$,&  for method (C), even $N$,\\[2ex]
$ \frac{\mathsf{C_N}(2N+1)}{N+1}(hk)^{2N+1}$,&  for method (C), odd $N$,\\[2ex]
$ -\frac{\mathsf{C_N}}{(2N+1)(2N+3)}(hk)^{2N+3}$,&  for method (A),\\[2ex]
$ -\frac{E_N}{(2N+1)^{2N+2}}(hk)^{2N+5}$,&  for method (A*),
 \end{tabular}
 \right.
\]
and the {\it dissipation error} for method (U) is 
\[
 \mathrm{Im}\left((k-k_{h,N})h\right)\approx
\mathsf{C_N}(hk)^{2N+2},
\]
whilst the {dissipation} error for methods (C), (A), and (A*) vanishes
since the discrete wave number $k_{h,N}$ is a real number (due to the fact that 
$|\lambda_{k,N}|=1$ for these methods; see Remark \ref{rk:disp} and Remark \ref{rk:disp2}).

% Observe that when $|\lambda_{h,N}|=1$, the corresponding method is 
% non-dissipative since the {\it discrete wave number} $k_{h,N}$ that satisfy
% % \begin{align}
% % \label{dwave}
% $\mathrm{e}^{ik_{h,N}h} = \lambda_{h,N}$
% % \end{align}
% is a real number.

\subsection{Explanation of results presented in Fig.~\ref{fig:ex1-diff}}
Now, let us apply 
the above results (for $N=0$) in Table \ref{table1} to explain the numerical results obtained in Fig.~\ref{fig:ex1-diff}.
For $\Omega=\omega h\ll 1$, 
the numerical solution obtained from each of the DG methods will 
satisfy
\[
 u_h(x,t) \approx \sin(\omega_h x-\omega t)
\]
at the nodes, and the relative error $R_{h,N}\approx i(\omega-\omega_h)h$.
Table \ref{table1} then implies
\begin{alignat*}{2}
 \omega_{h} \approx &\;  \omega +i \frac{\Omega^2}{2h} =
 2\pi + i\frac{\pi^2}{10} &&\quad \text{ for method (U)},\\
 \omega_{h} \approx &\;  \omega +\frac{\Omega^3}{6h} =
 2\pi + \frac{\pi^3}{300} &&\quad \text{ for method (C)},\\
 \omega_{h} \approx &\;  \omega +\frac{\Omega^3}{24h} =
 2\pi + \frac{\pi^3}{1,200} &&\quad \text{ for method (A)},\\
 \omega_{h} \approx &\;  \omega + \frac{\Omega^5}{180h} =
 2\pi + \frac{\pi^5}{900,000} &&\quad \text{ for method (A*)},
\end{alignat*}
where $h=1/20$, and $\Omega = \omega h = \pi/10$.
In particular, the maximum value of the solution at time $T=1$ for method (U) will,
thanks to numerical dissipation, not be unity but will instead take a value close to 
\[
\mathrm{e}^{-\frac{\pi^2}{10}\times 1}
\approx 0.37,
\]
which is in close agreement with the top left figure of Fig.~\ref{fig:ex1-diff}.
The phase lag for method (C) at time $T=5$ will be close to 
$
\frac{\Omega^3}{6 h}\times \frac{5}{\omega}
\approx 0.08
$, 
while for method (A) at time $T=20$ will be close to 
$
\frac{\Omega^3}{24 h}\times \frac{20}{\omega}
\approx 0.08
$, 
and for (A*) at time  $T=1500$ will be close to 
$
\frac{\Omega^5}{180 h}\times \frac{1500}{\omega}\approx 0.08.
$
All of these predictions are 
% which are also 
in close agreement with results in Fig.~\ref{fig:ex1-diff}.
\newcommand{\Pn}{\mathbb{P}_N}

\section{Dispersion analysis: the eigenvalue problem}\label{sec:disp}
In this section, we provide proofs of the dispersion analysis of the semi-discrete scheme \eqref{scheme:adv1d}
leading to the results stated in Table \ref{table1}.
We closely follow the analysis in \cite{Ainsworth04} and begin by seeking 
a  non-trivial bloch-wave solution of the form 
\begin{subequations}
\label{block}
 \begin{align}
  u_{h}(x,t) = \mathrm{e}^{-i\omega t}
  \sum_{m\in\mathbb{Z}}\lambda^m U(x-mh),\\
    \phi_{h}(x,t) = \mathrm{e}^{-i\omega t}
  \sum_{m\in\mathbb{Z}}\lambda^m\Phi(x-mh),
 \end{align}
 \end{subequations}
where $U,\Phi\in V_h^N$.
Denoting $\Omega = \omega h$, 
and transforming the domain over which 
the scheme \eqref{scheme:adv1d} is posed
% with solution \eqref{block}
to the reference interval $[-1,1]$,  we obtain the following eigenvalue problem which 
determines the value of the discrete Floquet multiplier $\lambda$:
% 
% Properties of the following eigenvalue problem on the reference interval $\widehat I :=[-1,1]$
% will prove useful in the analysis of the dispersion error:
Find $U, \Phi\in \Pn$ and $\lambda \in \mathbb{C}$ 
such that 
% for given $\Omega \in \mathcal{C}$,
\begin{subequations}
\label{eq:eig}
\begin{alignat}{2}
\label{eq:eig1}
-\frac12 i\Omega(U, v)+(U', v)&+\frac12\Big(
\lambda U(-1) - U(1) + \alpha\big(\lambda\Phi(-1)-\Phi(1)\big)\Big)v(1)&&\;\;\\ 
+
\frac12&\Big(
U(-1) - \lambda^{-1}U(1) -\alpha\big (\Phi(-1)-\lambda^{-1}\Phi(1)\big)\Big)v(-1) && =\; 0\nonumber\\
\label{eq:eig2}
\frac12 i\Omega(\Phi, \psi)+(\Phi', \psi)&+\frac12\Big(
\lambda \Phi(-1) - \Phi(1) + \alpha\big(\lambda U(-1)-U(1)\big)\Big)\psi(1)&&\\ 
+
\frac12&\Big(
\Phi(-1) - \lambda^{-1}\Phi(1) -\alpha\big (U(-1)-\lambda^{-1}U(1)\big)\Big)\psi(-1) && =\; 0,\nonumber
\end{alignat}
\end{subequations}
for all $v,\psi\in\Pn$.
Here $(\cdot,\cdot)$ indicates the $L^2$-inner product on the reference interval $[-1,1]$.
As usual, the condition under which the eigenvalue problem will possess non-trivial solutions reduces to an
algebraic equation for $\lambda$, which we now proceed to identify.
% We start with some notation.

\subsection{Notation and preliminaries}
We denote the differential operators 
\begin{align}
 \label{op}
 \mathcal{L}^\pm(v) := \mp \frac12i\Omega v+v' ,
\end{align}
and recall from \cite{Ainsworth04} the following polynomial functions of degree $N$:
\begin{subequations}
\label{eig-fun}
\begin{align}
\Psi_{N}^{1,\pm}(s) = &\sum_{m=0}^N(\pm i\Omega)^m\frac{(2N+1-m)!}{(2N+1)!}P_m^{(N-m, N-m+1)}(s),\\
\Psi_{N}^{2,\pm}(s) = &\sum_{m=0}^N(\pm i\Omega)^m\frac{(2N+1-m)!}{(2N+1)!}P_m^{(N-m+1, N-m)}(s),
\end{align}
where $P_m^{(p, q)}(s)$ denotes the Jacobi polynomial of type $(p,q)$ and degree $N$.
\end{subequations}
Elementary calculation \cite{Ainsworth04} yields that 
\begin{subequations}
\label{eq-op}
\begin{align}
\label{eq-op1}
 \mathcal{L}^+\Psi_N^{1,+} = &\; -\frac{(i\Omega)^{N+1}}{2}\frac{(N+1)!}{(2N+1)!}P_N^{(0,1)}(s),\\
\label{eq-op2}
 \mathcal{L}^+\Psi_N^{2,+} = &\; -\frac{(i\Omega)^{N+1}}{2}\frac{(N+1)!}{(2N+1)!}P_N^{(1,0)}(s),\\
\label{eq-op3}
 \mathcal{L}^-\Psi_N^{1,-} = &\; -\frac{(-i\Omega)^{N+1}}{2}\frac{(N+1)!}{(2N+1)!}P_N^{(0,1)}(s),\\
\label{eq-op4}
 \mathcal{L}^-\Psi_N^{2,-} = &\; -\frac{(-i\Omega)^{N+1}}{2}\frac{(N+1)!}{(2N+1)!}P_N^{(1,0)}(s),
\end{align}
and standard properties of the Jacobi polynomials reveal that
% and
\begin{align}
\label{eq-op5}
 (\mathcal{L}^+\Psi_N^{1,+},1) = &\;-(\mathcal{L}^-\Psi_N^{2,-},1) =\frac{N!}{(2N+1)!} {(-i\Omega)^{N+1}},\\
\label{eq-op6}
 (\mathcal{L}^+\Psi_N^{2,+},1) = &\;-(\mathcal{L}^-\Psi_N^{1,-},1) = -\frac{N!}{(2N+1)!}{(i\Omega)^{N+1}}.
\end{align}
\end{subequations}

Let ${}_1F_1$ be the confluent hypergeometric function defined by the series
\begin{align}
 \label{hyper}
  {}_1F_1(a, b, z) =\sum_{m=0}^\infty\frac{(a)_m}{(b)_m}\frac{z^m}{m!},
%  {}_1F_1(a, b, z) = 1+\frac{a}{b}z +\frac{a}{b}\frac{a+1}{b+1}\frac{z^2}{2!}+
%  \frac{a}{b}\frac{a+1}{b+1}\frac{a+2}{b+2}\frac{z^3}{3!}+\cdots
\end{align}
where $(a)_0=1$, and $(a)_m = a (a+1)\cdots (a+m-1)$ denotes the Pochhammer's notation.
To further simplify notation, we denote 
\begin{subequations}
\label{hyper-cst}
\begin{align}
F_N^\pm = &\;
  {}_1F_1(-N, -2N-1, \pm i\Omega),\\
F_{N+1}^\pm = &\;
  {}_1F_1(-N-1, -2N-1, \pm i\Omega),\\
  \label{hyper-cst3}
  \Xi_N = &\;
  \frac{(F_N^-)^2+(F_N^+)^2+(F_{N+1}^-)^2+(F_{N+1}^+)^2}  {F_N^-F_{N+1}^++F_N^+F_{N+1}^-},\\
    \label{hyper-cst4}
  Z_N = &\;
  \frac{(F_N^-)^2-(F_N^+)^2+(F_{N+1}^-)^2-(F_{N+1}^+)^2}  {F_N^-F_{N+1}^--F_N^+F_{N+1}^+}.
\end{align}
\end{subequations}
It is elementary to show that 
\begin{subequations}
\label{cst2}
\begin{alignat}{2}
\label{cst2-1}
\Psi_N^{1,+}(-1) =& \Psi_N^{2,-}(1)&&=
F_{N+1}^- - (-i\Omega)^{N+1}\frac{N!}{(2N+1)!},\\
% {}_1F_1(-N-1, -2N-1, -i\Omega) - (-i\Omega)^{N+1}\frac{N!}{(2N+1)!},\\
\label{cst2-2}
\Psi_N^{1,+}(1) =&\Psi_N^{2,-}(-1)
&&=\; F_{N}^+,\\
% &&=\; {}_1F_1(-N, -2N-1, i\Omega),\\
\label{cst2-3}
\Psi_N^{2,+}(-1) =&
\Psi_N^{1,-}(1) 
&&=\; F_N^- ,\\
% &&=\; {}_1F_1(-N, -2N-1, -i\Omega) ,\\
\label{cst2-4}
\Psi_N^{2,+}(1) =&
\Psi_N^{1,-}(-1)
&&=\; F_{N+1}^+ - (i\Omega)^{N+1}\frac{N!}{(2N+1)!}.
% &&=\; {}_1F_1(-N-1, -2N-1, i\Omega) - (-i\Omega)^{N+1}\frac{N!}{(2N+1)!}.
\end{alignat}
\end{subequations}
It is also easy to verify that $\Xi_N$ is a real number 
% with $|\Xi_N|\le2$
and $Z_N$ is a purely imaginary number for $\Omega\in \mathbb{R}$.
Finally, we denote the constants
\begin{subequations}
 \label{roots}
 \begin{align}
 \label{roots1}
 \lambda_N^\pm = \frac12(\Xi_N\pm\sqrt{\Xi_N^2-4}),
 \end{align}
 and
 \begin{align}
 \label{roots2}
 \mu_N^\pm = \frac12(Z_N\mp\sqrt{Z_N^2+4}),
 \end{align}
\end{subequations}
corresponding to 
% Note that $\lambda_N^\pm$ are 
the pairs of roots of the quadratic equations
$
 \lambda^2-\Xi_N \lambda +1 = 0,
$
% which are complex-conjugate to each other, thanks to the fact that 
% $\Xi_N$ is real and  $|\Xi_N|\le2$.
% Note also that 
and 
% that $\mu_N^\pm$ are the two roots of the quadratic equation 
$
 \mu^2-Z_N \mu -1 = 0,
$
respectively.

\subsection{Conditions for an eigenvalue. Case $\bld{\alpha=1}$}
We first consider the case $\alpha = 1$ in the numerical fluxes \eqref{flux:2x2}, 
which corresponds to method (A).
Our main result for the eigenvalue problem \eqref{eq:eig} in this case 
is summarized as follows:
% contained in the following theorem.
\begin{theorem}
 \label{thm:eig}
 There exists a non-trivial Bloch wave solution of the form \eqref{block} 
 for the scheme \eqref{scheme:adv1d} with numerical fluxes \eqref{flux:2x2} 
 with $\alpha=1$ if and only if $\lambda = \lambda_N^\pm$ with 
$\lambda_N^\pm$ given in \eqref{roots1}. 
%  Recall that 
% $\lambda_N^{+}$ and $\lambda_N^-$ are complex-conjugate to each other with modulus 1.
%  If $\lambda = \lambda^+$, then equations \eqref{eq:eig} admits a non-trivial solution 
%  $(U_N^+, \Phi_N^+)\in \Pn\times \Pn$ of the form
%  \begin{align}
%  \label{eig1}
%   U_N^+ = \Psi_N^{1,+}+\mu^+\Psi_N^{2,+}, \quad 
%   \Phi_N^+ =(-1)^N( \Psi_N^{1,-}-\mu^+\Psi_N^{2,-}). 
%  \end{align}
%  
%  If $\lambda = \lambda^-$, then equations \eqref{eq:eig} admits a non-trivial solution 
%  $(U_N^-, \Phi_N^-)\in \Pn\times \Pn$ of the form
%  \begin{align}
%  \label{eig2}
%   U_N^- = \Psi_N^{1,+}+\mu^-\Psi_N^{2,+}, \quad 
%   \Phi_N^- =(-1)^N( \Psi_N^{1,-}-\mu^-\Psi_N^{2,-}). 
% \end{align}
% 
% For all other values of $\lambda$, the equations \eqref{eq:eig} do not admits any non-trivial solution.
\end{theorem}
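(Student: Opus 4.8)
The plan is to follow the template of \cite{Ainsworth04}: first use the two bulk equations in \eqref{eq:eig} to pin down the form of any Bloch solution as a combination of the functions $\Psi_N^{j,\pm}$, then insert the endpoint values \eqref{cst2} into the flux terms to arrive at a small homogeneous linear system whose singularity condition is exactly $\lambda^2-\Xi_N\lambda+1=0$. Throughout we assume $\Omega\neq0$.

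\emph{Step 1: a structure lemma.} With the notation \eqref{op}, the bulk parts of \eqref{eq:eig1}--\eqref{eq:eig2} read $(\mathcal{L}^+U,v)=-\tfrac12 A v(1)-\tfrac12 B v(-1)$ and $(\mathcal{L}^-\Phi,\psi)=-\tfrac12 C\psi(1)-\tfrac12 D\psi(-1)$ for all $v,\psi\in\Pn$, where $A,B,C,D$ are the bracketed flux combinations in \eqref{eq:eig}. For $\Omega\neq0$ the map $\mathcal{L}^\pm:\Pn\to\Pn$ is a bijection, since $\mathcal{L}^\pm p$ has the same degree as $p$ and has only $0$ in its kernel. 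Moreover $P_N^{(0,1)}$ is $L^2(-1,1)$-orthogonal to $(1+s)\mathbb{P}_{N-1}=\{p\in\Pn:p(-1)=0\}$, a hyperplane in $\Pn$, so after normalisation $P_N^{(0,1)}$ is the Riesz representer of $v\mapsto v(-1)$ on $\Pn$, and likewise $P_N^{(1,0)}$ represents $v\mapsto v(1)$. Combining this with \eqref{eq-op1}--\eqref{eq-op2}, the functional $v\mapsto-\tfrac12Av(1)-\tfrac12Bv(-1)$ is the image under $(\mathcal{L}^+\,\cdot\,,\cdot)$ of a specific linear combination of $\Psi_N^{1,+}$ and $\Psi_N^{2,+}$; bijectivity then forces $U=a\Psi_N^{1,+}+b\Psi_N^{2,+}$, and by \eqref{eq-op3}--\eqref{eq-op4} similarly $\Phi=c\Psi_N^{1,-}+d\Psi_N^{2,-}$, for some $a,b,c,d\in\mathbb{C}$.

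\emph{Step 2: reduction to a $2\times2$ system.} Substituting this ansatz and testing against arbitrary $v\in\Pn$ (for $N\ge1$ the pair $(v(-1),v(1))$ is arbitrary; for $N=0$ only the combined equation survives and is treated identically), the bulk equations split into $a\beta_1+\tfrac12B=0$, $b\beta_2+\tfrac12A=0$, $c\beta_1'+\tfrac12D=0$, $d\beta_2'+\tfrac12C=0$, where $\beta_1=(\mathcal{L}^+\Psi_N^{1,+},1)$ etc. are the nonzero constants furnished by \eqref{eq-op5}--\eqref{eq-op6}. This is where $\alpha=1$ is used: then $C=A$ and $D=-B$, so the last two relations give $c=-(\beta_1/\beta_1')a$ and $d=(\beta_2/\beta_2')b$, eliminating $c$ and $d$. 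Expanding $U(\pm1),\Phi(\pm1)$ via \eqref{cst2} turns $A$ and $B$ into linear forms in $(a,b)$ whose coefficients are polynomials in $F_N^\pm,F_{N+1}^\pm$ and $\lambda^{\pm1}$; the spurious $(\pm i\Omega)^{N+1}\tfrac{N!}{(2N+1)!}$ terms cancel against $\beta_1,\beta_2$, leaving a $2\times2$ homogeneous system $M(\lambda,\Omega)(a,b)^{\mathsf T}=0$.

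\emph{Step 3: the determinant.} A non-trivial Bloch solution of the form \eqref{block} exists precisely when $\det M(\lambda,\Omega)=0$. After clearing the factor $\lambda^{-1}$, $\det M$ is a quadratic in $\lambda$, and the plan is to show — using the $F$-polynomial identities underlying \eqref{cst2} together with the reality of $\Xi_N$ recorded after \eqref{hyper-cst} — that $\det M$ equals a nonzero constant times $\lambda^2-\Xi_N\lambda+1$, with $\Xi_N$ the combination \eqref{hyper-cst3}. This gives $\lambda=\lambda_N^\pm$ as in \eqref{roots1} and finishes the proof. The main obstacle is exactly this last algebraic collapse: one must check that the diagonal and off-diagonal entries of $M$ combine so that the $\lambda^1$-coefficient is, up to the common nonzero factor, precisely the ratio $\Xi_N$, and that the leading coefficient does not vanish so no root is lost; the rest is bookkeeping. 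As a sanity check, $N=0$ gives $F_0^\pm=1$, $F_1^\pm=1\pm i\Omega$, hence $\Xi_0=2-\Omega^2$ and $\lambda_0^\pm=\tfrac12\big(2-\Omega^2\pm\Omega\sqrt{\Omega^2-4}\,\big)$, consistent with the $N=0$ entry of Table~\ref{table1}.
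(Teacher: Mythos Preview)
Your proposal is correct and follows essentially the same route as the paper: identify $U,\Phi$ as combinations of the $\Psi_N^{j,\pm}$, use the $\alpha=1$ symmetry to reduce the four coefficients to two (the paper does this by testing with $v=1\mp s,\ \psi=\pm(1\mp s)$ and adding, which is exactly your relations $C=A$, $D=-B$), and then derive the quadratic for $\lambda$. The only substantive difference is that in Step~3 the paper carries out the algebra explicitly---it shows the two surviving equations simplify to $\lambda(F_{N+1}^-+\mu F_N^-)=F_N^++\mu F_{N+1}^+$ and $\lambda(F_{N+1}^+-\mu F_N^+)=F_N^--\mu F_{N+1}^-$ and eliminates $\mu=b/a$---whereas you leave the determinant collapse as a plan; this is indeed routine, and your $N=0$ check is consistent.
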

\begin{proof}
The proof is elementary and follows a similar path to \cite[Lemma 3]{Ainsworth04}.
We assume the polynomial degree $N\ge 1$ (the lowest order case $N=0$ can be verified easily as a special case).

We shall prove that $\lambda=\lambda_N^\pm$ are the {\it only} two eigenvalues of the problem \eqref{eq:eig}.
To this end, let $\lambda$ be an eigenvalue of \eqref{eq:eig} with $\alpha = 1$, 
with $(U,\Phi)\in \Pn\times\Pn$ corresponding (non-trivial) eigenfunctions.
Equation \eqref{eq:eig1} implies that 
\[
 (\mathcal{L}^+U, v) = 0,\quad \forall v = (1-s)(1+s)w, \text{ with }w \in \mathbb{P}_{N-2},
\]
and hence, since $\mathcal{L}^+U\in \Pn$, 
we obtain
% the above equation implies that 
\[
 \mathcal{L}^+U = \tilde a^+ P_N^{(0,1)}+\tilde b^+ P_N^{(1,0)},
\]
where $\tilde a^+,\tilde b^+\in\mathbb{C}$ are constants to be determined.
Using the fact that $\mathcal{L^+}:\Pn\rightarrow \Pn$ is one-to-one along with \eqref{eq-op},
we get 
\begin{align}
\label{form-u}
U =  a^+ \Psi_N^{1,+}+ b^+ \Psi_N^{2,+}.
\end{align}
% again, with  $ a^+, b^+\in\mathbb{C}$ being constants to be determined.
Similar, we have 
\begin{align}
\label{form-p}
\Phi =  a^- \Psi_N^{1,-}+ b^- \Psi_N^{2,-}, 
\end{align}
with  $ a^-, b^-\in\mathbb{C}$ constants to be determined.
Now, taking $v=1-s$ and $\psi=1-s$ in equations \eqref{eq:eig} and adding, we get
\[
0= (\mathcal{L}^+U, 1-s)+
  (\mathcal{L}^-\Phi, 1-s)
  =
  2 (-i\Omega)^{N+1}\frac{N!}{(2N+1)!}( a^+-(-1)^N a^-), 
\]
which implies that 
\[
  a^-=(-1)^N a^+.
\]
Similarly, 
take $v=1+s$ and $\psi=-(1+s)$ in equations \eqref{eq:eig} and adding, we get
\[
0= (\mathcal{L}^+U, 1+s)-
  (\mathcal{L}^-\Phi, 1+s)
  =
  -2 (i\Omega)^{N+1}\frac{N!}{(2N+1)!}( b^++(-1)^N b^-), 
\]
which implies that 
\[
  b^-=-(-1)^{N} b^+.
\]
Hence, 
\[
 \Phi = (-1)^N( a^+ \Psi_N^{1,-}- b^+ \Psi_N^{2,-}).
\]
Without loss of generality, we assume that $ a^+ = 1$, and denote $\mu =  b^+/ a^+ =  b^+$.
Thus, we have identified the eigenfunctions. In order to identify the eigenvalues, we choose
% The only two more conditions that $\mu$ and $\lambda$ must satisfy are the algebraic equations 
% \eqref{eq:eig1} with 
test function $v = 1-s$, and $v=1+s$ in equation \eqref{eq:eig1}, respectively. 
% Let us derive these two equations.

% Let us first verify that, with  $\lambda = \lambda^+, U= U_N^+, \Phi=\Phi_N^+$,
% equation \eqref{eq:eig1} is satisfied for all $v\in \Pn$. 
% By \eqref{eq-op1}, \eqref{eq-op2}, and the orthogonality properties of the Jacobi polynomials, 
% one conclude that equation \eqref{eq:eig1} is satisfied for all $v= (1-s)(1+s)w$ for some $w\in \mathcal{P}_{N-2}$.
% It only remains to show that \eqref{eq:eig1} is satisfied for the test functions $v = 1\pm s$.
Using \eqref{cst2}, elementary calculation yields that
\begin{subequations}
\begin{align}
U(-1)+ 
\Phi(-1) =  &\;(F_{N+1}^-+(-1)^NF_{N+1}^+)
+ \mu (F_{N}^--(-1)^NF_{N}^+),\\
U(-1)-
\Phi(-1) =  &\;(F_{N+1}^--(-1)^NF_{N+1}^+)
+ \mu (F_{N}^-+(-1)^NF_{N}^+)\nonumber\\
&\;\;-2(-i\Omega)^{N+1}\frac{N!}{(2N+1)!},\\
U(1)+
\Phi(1) =  &\;(F_{N}^++(-1)^NF_{N}^-)
+ \mu (F_{N+1}^+-(-1)^NF_{N+1}^-)\nonumber\\
&\;\;-2\mu(i\Omega)^{N+1}\frac{N!}{(2N+1)!},\\
U(1)-
\Phi(1) =  &\;
(F_{N}^+-(-1)^NF_{N}^-)
+ \mu (F_{N+1}^++(-1)^NF_{N+1}^-).
% U_N^+(-1)+ 
% \Phi_N^+(-1) =  &\;(F_{N+1}^-+(-1)^NF_{N+1}^+)
% + \mu^+ (F_{N}^--(-1)^NF_{N}^+),\\
% U_N^+(-1)-
% \Phi_N^+(-1) =  &\;(F_{N+1}^--(-1)^NF_{N+1}^+)
% + \mu^+ (F_{N}^-+(-1)^NF_{N}^+)\nonumber\\
% &\;\;-2(-i\Omega)^{N+1}\frac{N!}{(2N+1)!},\\
% U_N^+(1)+ 
% \Phi_N^+(1) =  &\;(F_{N}^++(-1)^NF_{N}^-)
% + \mu^+ (F_{N+1}^+-(-1)^NF_{N+1}^-)\nonumber\\
% &\;\;-2\mu(i\Omega)^{N+1}\frac{N!}{(2N+1)!},\\
% U_N^+(1)+
% \Phi_N^+(1) =  &\;
% (F_{N}^+-(-1)^NF_{N}^-)
% + \mu^+ (F_{N+1}^++(-1)^NF_{N+1}^-).
\end{align}
\end{subequations}
Combing the above identities with \eqref{eq-op5} and \eqref{eq-op6}, 
equation \eqref{eq:eig1} with $v=1-s$ 
reduces to an algebraic equation for $\lambda$ and $\mu$
\begin{align*}
&(F_{N+1}^--(-1)^NF_{N+1}^+)
+ \mu (F_{N}^-+(-1)^NF_{N}^+)\\
&-\lambda^{-1}\Big(
(F_{N}^+-(-1)^NF_{N}^-)
+ \mu^+ (F_{N+1}^++(-1)^NF_{N+1}^-)
\Big)=0,
\end{align*}
whilst equation \eqref{eq:eig1} with 
$v=1+s$ gives a second algebraic equation
\begin{align*}
&\lambda(F_{N+1}^-+(-1)^NF_{N+1}^+)
+ \mu (F_{N}^--(-1)^NF_{N}^+)\\
&-\Big(
(F_{N}^++(-1)^NF_{N}^-)
+ \mu (F_{N+1}^+-(-1)^NF_{N+1}^-)
\Big)=0.
\end{align*}
Simplifying leads to the algebraic system
\begin{align*}
\lambda (F_{N+1}^-+ \mu F_{N}^-)-(F_{N}^++ \mu F_{N+1}^+)=0,\\
\lambda (F_{N+1}^+- \mu F_{N}^+)-(F_{N}^-- \mu F_{N+1}^-)=0.
\end{align*}
Eliminating $\mu$ then gives
\[
 \lambda^2 - \Xi_N \lambda +1 = 0,
\]
while eliminating $\lambda$ gives
\[
 \mu^2 - Z_N \mu -1 = 0,
\]
with $\Xi_N$ and $Z_N$ given in \eqref{hyper-cst3} and \eqref{hyper-cst4}, respectively.
Hence, $\lambda=\lambda_N^\pm$, with the corresponding $\mu = \mu_N^\pm$ given in \eqref{roots}.
This completes the proof.
% 
% The above equations implies that \eqref{eq:eig1} is satisfied for $v = 1-s$. 
% Similarly, one can show \eqref{eq:eig1} is satisfied for $v = 1+s$.
% Hence, the equation \eqref{eq:eig1} is satisfied for all $v\in \Pn$.
% Using the same argument, one can also show that the equation \eqref{eq:eig2} is satisfied for 
% all $\psi\in \Pn$. This conclude the proof for the case $\lambda = \lambda^+$.
% The proof for the case $\lambda^-$ follows similar lines, and we omit it for simplicity.
\end{proof}
\begin{remark}[Matrix-vector form]
Given a set of basis functions of $\Pn$,
one can directly formulate eigenvalue problem \eqref{eq:eig} in matrix-vector form
\[
 A(\lambda)\mathbf{x} = 0, 
\]
where
$A(\lambda)\in \mathbb{R}^{2(N+1)\times 2(N+1)}$ and $\mathbf{x}\in\mathbb{R}^{2(N+1)}$. 
A non-trivial solution exists if and only if the determinant of $A(\lambda)$ vanishes.
Theorem \ref{thm:eig} shows that, after proper normalization,
\begin{align}
\label{det}
 \det(A(\lambda)) = \lambda^2 -\Xi_N\lambda +1. 
\end{align}
% Using Mathematica \cite{Mathematica}, we were able to show that
% $f(\lambda)=1$ after normalization, 
% which has been numerically confirmed for polynomial degree $N$ up to $17$
% using computer algebra.
% We used the Bernstein basis functions \cite{AinsworthAndriamaroDavydov11} to generate the 
% matrix $A(\lambda)$. 
% It is conjectured that $f(\lambda)=1$ after normalization for all polynomial degree $N$.
% This means that $\lambda^\pm$ would be the {\it only} two eigenvalues of the problem \eqref{eq:eig}.
% A distinctive advantage of Bernstein basis is that the 
% coefficients for the corresponding matrix $A(1)$ are all rational numbers, 
% which is ideal for symbolic manipulate in Mathematica.
\end{remark}

\newcommand{\En}{\mathcal{E}_N}
\subsection{Properties of the eigenvalues. Case $\bld{\alpha= 1}$}
\label{sec:eig}
The next result characterises the solutions of the algebraic eigenvalue equation as approximations to the
modes $\{\lambda_N^+,\lambda_N^-\} \approx \{\mathrm{e}^{i\Omega}, \mathrm{e}^{-i\Omega}\}$.
It will be shown that $\lambda_N^+$ approximates the mode $\mathrm{e}^{i\Omega}$ if $\sin(\Omega)\ge 0$, while 
it approximates the mode  $\mathrm{e}^{-i\Omega}$ if $\sin(\Omega)< 0$.
% Without loss of generality, we flip the two eigenvalue $\lambda_N^\pm$ in the case when $\sin(\Omega)<0$, i.e.,
Thus, it is convenient to define
\[
 \lambda_N^\pm = \left\{\begin{tabular}{ll}
$ \frac{\Xi_N\pm \sqrt{\Xi_N^2-4}}{2}$& 
$ \text{ if } \sin(\Omega)\ge0,$\\[2ex]
$ \frac{\Xi_N\mp \sqrt{\Xi_N^2-4}}{2}$& 
$ \text{ if } \sin(\Omega)<0,$
                        \end{tabular}\right.
\]
so that the algebraic eigenvalue $\lambda_N^+$ always approximates the {\it positive} mode $\mathrm{e}^{i\Omega}$.
We denote the relative error
\begin{align}
\label{rho}
 \rho_N^\pm = \frac{\mathrm{e}^{\pm i\Omega}-\lambda_N^\pm}{ \mathrm{e}^{\pm i\Omega}}.
\end{align}
It was shown in \cite{Ainsworth04} in the case of upwinding scheme (U) and the 
centered flux scheme (C) that
the relative error $\rho_{N}^\pm$ is dictated by the remainder in certain Pad\'e approximants 
of the exponential. The following result shows that the accuracy of the {\it same} Pad\'e approximants dictates
the error in the scheme (A):
% as we see in the 
% following result.
% We need the following intermediate result on the approximation of $\Xi_N$ in \eqref{hyper-cst3}.
\begin{theorem}
 \label{thm:disp}
There holds
\begin{align}
 \label{est-x}
 \Xi_N  = 2\cos(\Omega) + 
 2\Theta_N\sin(\Omega)
%  2\frac{\mathrm{Im}((F_N^-)^2\mathrm{e}^{i\Omega}\En)}{\mathrm{Re}((F_N^-)^2\mathrm{e}^{i\Omega})}\sin(\Omega)
 +\mathcal{O}(|\En|^2),
\end{align}
where $\Theta_N\in\mathbb{R}$ is given by 
\begin{align}
\label{theta}
\Theta_N= \mathrm{Im}(\En)+ 
   \mathrm{Re}(\En)\frac{\mathrm{Im}((F_N^-)^2\mathrm{e}^{i\Omega})}{\mathrm{Re}((F_N^-)^2\mathrm{e}^{i\Omega})}, 
\end{align}
% where $\mathrm{Re}(\cdot)$ and $\mathrm{Im}(\cdot)$ are the real and imaginary parts of a complex number, 
and
 \begin{align}
 \label{pade}
\En = \frac{\mathrm{e}^{i\Omega}-[N+1/N]_{\mathrm{e}^{i\Omega}}}{\mathrm{e}^{i\Omega}},
\end{align}
with
$
 [N+1/N]_{\mathrm{e}^{i\Omega}} = \frac{F_{N+1}^+}{F_{N}^-}
$
being the $[N+1/N]$-Pad\'e approximant of $\mathrm{e}^{i\Omega}$.

Moreover, there holds
\begin{align}
 \label{est-lam}
 \rho_{N}^\pm  =\pm i\Theta_N + \mathcal{O}(|\En|^2).
%  i\frac{\mathrm{Im}((F_N^-)^2\mathrm{e}^{i\Omega}\En)}{\mathrm{Re}((F_N^-)^2\mathrm{e}^{i\Omega})}+ \mathcal{O}(|\En|^2).
\end{align}
% where $\mathsf{s}$, given by \eqref{ss},  is either $1$ or $-1$.
% and 
% \begin{align}
% \label{est-theta}
% \frac{\mathrm{Im}((F_N^-)^2\mathrm{e}^{i\Omega})}{\mathrm{Re}((F_N^-)^2\mathrm{e}^{i\Omega})}
%   =&\; \frac{\Omega}{2N+1}
%   \left(1+C_N\left[\frac{\Omega}{2N+1}\right]^2+\mathcal{O}\left(\left[\frac{\Omega}{2N+1}\right]^4\right)\right),
% \end{align}
% with 
% $C_N=\left\{\begin{tabular}{ll}
%          $1/3$ & $N=0$,\\[.4ex]
%          $\frac{N}{2N-1}\left[\frac{(N+1)!}{(2N+2)!}\right]^2$ & $N>0$.         
%         \end{tabular}\right.$ 
\end{theorem}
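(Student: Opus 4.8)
The plan is to start from the exact expression $\Xi_N = \big[(F_N^-)^2+(F_N^+)^2+(F_{N+1}^-)^2+(F_{N+1}^+)^2\big]\big/\big[F_N^-F_{N+1}^++F_N^+F_{N+1}^-\big]$ in \eqref{hyper-cst3} and rewrite everything in terms of the Pad\'e remainder $\En$ of \eqref{pade}. The key algebraic observation is that $F_{N+1}^+ = \mathrm{e}^{i\Omega}F_N^-(1-\En)$, and, taking complex conjugates and using that $\overline{F_N^\pm}=F_N^\mp$ for $\Omega\in\mathbb R$, also $F_{N+1}^- = \mathrm{e}^{-i\Omega}F_N^+(1-\overline{\En})$. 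Substituting these two identities into the numerator and denominator of $\Xi_N$ turns every term into a multiple of $F_N^-F_N^+ = |F_N^-|^2$ (or $(F_N^-)^2$, $(F_N^+)^2$) times an explicit function of $\mathrm{e}^{i\Omega}$ and $\En$. After cancelling the common factor, $\Xi_N$ becomes a ratio whose numerator and denominator are each $O(1)$ quantities perturbed at order $\En$; expanding the denominator as a geometric series $1/(1+O(\En)) = 1 - O(\En) + O(|\En|^2)$ gives $\Xi_N = 2\cos\Omega + (\text{linear in }\En,\overline{\En}) + O(|\En|^2)$.

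Next I would isolate the linear term. Writing $\En = \mathrm{Re}(\En)+i\,\mathrm{Im}(\En)$ and collecting, the coefficient of the linear part will naturally split into a piece proportional to $\mathrm{Im}(\En)$ and a piece proportional to $\mathrm{Re}(\En)$, with the latter weighted by a real ratio involving $(F_N^-)^2\mathrm{e}^{i\Omega}$; matching this against \eqref{theta} identifies the factor $\Theta_N$ and shows it is real (it is manifestly real as written, since $\mathrm{Im}(\En)$, $\mathrm{Re}(\En)$, and the quotient of an imaginary part over a real part are all real). The slightly delicate point is keeping track of why the remaining $O(|\En|^2)$ genuinely collects all higher-order contributions — both the quadratic-in-$\En$ terms from the numerator and the $O(|\En|^2)$ tail of the geometric expansion of the denominator — and why no further linear-in-$\overline{\En}$ term survives that is \emph{not} already packaged into the $\mathrm{Re}(\En)$ and $\mathrm{Im}(\En)$ combination; this requires using the reality of $\Xi_N$ (noted after \eqref{cst2}) to pair conjugate terms. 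That bookkeeping is where I expect the main friction, though it is bookkeeping rather than anything deep.

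For the second assertion \eqref{est-lam}, recall $\lambda_N^\pm = \tfrac12(\Xi_N \pm \sqrt{\Xi_N^2-4})$ with the sign convention fixed so that $\lambda_N^+$ tracks $\mathrm{e}^{i\Omega}$. Substituting $\Xi_N = 2\cos\Omega + 2\Theta_N\sin\Omega + O(|\En|^2)$ and using $2\cos\Omega = \mathrm{e}^{i\Omega}+\mathrm{e}^{-i\Omega}$, $2i\sin\Omega = \mathrm{e}^{i\Omega}-\mathrm{e}^{-i\Omega}$, one computes $\Xi_N^2 - 4 = (\mathrm{e}^{i\Omega}-\mathrm{e}^{-i\Omega})^2 + 4\Theta_N\sin\Omega\,(\mathrm{e}^{i\Omega}+\mathrm{e}^{-i\Omega}) + O(|\En|^2) = -4\sin^2\Omega\big(1 - i\Theta_N(\cot\Omega\cdot\text{stuff})\big)+\cdots$; more cleanly, I would just write $\Xi_N = \mathrm{e}^{i\Omega}(1+\delta_+) + \mathrm{e}^{-i\Omega}(1+\delta_-)$ with $\delta_\pm = \pm\Theta_N\sin\Omega/\mathrm{e}^{\pm i\Omega}\cdot 2$ suitably apportioned, factor $\Xi_N^2-4 = (\lambda - \mathrm{e}^{i\Omega})(\lambda-\mathrm{e}^{-i\Omega})$-type discriminant, and take the square root to first order. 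The upshot is $\lambda_N^+ = \mathrm{e}^{i\Omega}(1 - i\Theta_N) + O(|\En|^2)$, i.e. $\rho_N^+ = (\mathrm{e}^{i\Omega}-\lambda_N^+)/\mathrm{e}^{i\Omega} = i\Theta_N + O(|\En|^2)$, and the $\lambda_N^-$ case follows by conjugation (or by the other branch of the square root), giving $\rho_N^- = -i\Theta_N + O(|\En|^2)$. The one subtlety here is verifying that the square-root branch chosen by the $\sin(\Omega)\ge 0$ / $\sin(\Omega)<0$ convention is exactly the one that sends $\lambda_N^+$ to the perturbation of $\mathrm{e}^{i\Omega}$ rather than $\mathrm{e}^{-i\Omega}$; this is a sign check on $\mathrm{Im}(\sqrt{\Xi_N^2-4})$ that I would dispatch by evaluating at $\En = 0$, where $\lambda_N^\pm = \mathrm{e}^{\pm i\Omega}$ exactly, and invoking continuity in $\En$.
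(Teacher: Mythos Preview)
Your proposal is correct and follows essentially the same route as the paper: the paper too uses the key identity $F_{N+1}^+ = \mathrm{e}^{i\Omega}F_N^-(1-\En)$ together with the conjugation relations $F_N^+=\overline{F_N^-}$, $F_{N+1}^-=\overline{F_{N+1}^+}$, substitutes into \eqref{hyper-cst3} to obtain a closed ratio in $H_N:=(F_N^-)^2\mathrm{e}^{i\Omega}$ and $\En$, and then expands in $\En$; for \eqref{est-lam} it likewise substitutes $\Xi_N=2\cos\Omega+2\Theta_N\sin\Omega+\mathcal O(|\En|^2)$ into $\lambda_N^\pm=\tfrac12(\Xi_N\pm\sqrt{\Xi_N^2-4})$ and reads off $\lambda_N^\pm=\mathrm{e}^{\pm i\Omega}(1\mp i\Theta_N)+\mathcal O(|\En|^2)$. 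The only cosmetic difference is that the paper first simplifies $\Xi_N$ to the exact real form $2\big[\cos\Omega\,\mathrm{Re}(H_N)-\mathrm{Re}(H_N\mathrm{e}^{i\Omega}\En)+\tfrac12\mathrm{Re}(H_N\mathrm{e}^{i\Omega}\En^2)\big]/\big[\mathrm{Re}(H_N)-\mathrm{Re}(H_N\En)\big]$ before expanding, which packages your ``reality bookkeeping'' automatically.
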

\begin{proof}
To ease the notation, we denote 
\begin{align}
\label{hn}
 H_N = (F_N^{-})^2\mathrm{e}^{i\Omega}. 
\end{align}
We first obtain the estimate \eqref{est-x}.
 By the definition of $\En$ in \eqref{pade}, we have 
 \[
  F_{N+1}^+ = F_N^{-}\mathrm{e}^{i\Omega}(1-\En),
 \]
 and by definition of the constants in \eqref{hyper-cst}, we have 
\[
  F_{N}^+ = \mathrm{Conj}(F_N^-),\quad 
 F_{N+1}^- = \mathrm{Conj}(F_{N+1}^+).
 \]
%  where $\mathrm{Conj}(\cdot)$ is the complex conjugate of a complex number.
Applying the above expressions to \eqref{hyper-cst3} and simplifying, we get
\begin{align*}
 \Xi_N = &\;2\frac{\mathrm{cos}(\Omega)\,\mathrm{Re}(H_N)-\mathrm{Re}(H_N\mathrm{e}^{i\Omega}\En)
 +\frac12\mathrm{Re}(H_N\mathrm{e}^{i\Omega}\En^2)
 }{\mathrm{Re}(H_N) - \mathrm{Re}(H_N\En)}.
%  = &\; 
%  2(\mathrm{cos}(\Omega)+ \frac{\mathrm{Im}(H_N\En)}{\mathrm{Re}(H_N)}\mathrm{sin}(\Omega)
%  +\mathcal{O}(\En^2).
\end{align*}
We then get the estimate \eqref{est-x} by  performing a series expansion in $\En\ll 1$ of the above right hand side.

Using the definition of $\lambda_N^\pm$ in \eqref{roots1}, we  obtain
\begin{align*}
 \lambda_N^\pm = &\;(\cos(\Omega)+\Theta_N \sin(\Omega))
 \pm i(\sin(\Omega)-\Theta_N\cos(\Omega))+\mathcal{O}(|\mathcal{E}_N|^2)\\
 = &\; \mathrm{e}^{\pm i\Omega}(1\mp i\Theta_N)+\mathcal{O}(|\mathcal{E}_N|^2).
\end{align*}
The estimate \eqref{est-lam} now follows directly from 
 definition \eqref{rho}.
% the definition of $\lambda^+$ in \eqref{roots1} and the estimate of $\Xi_N$ in \eqref{est-x}, along with 
% an asymptotic expansion of $\rho_N$ on $\En$. We omit the details.
\end{proof}

\begin{remark}[Asymptotic behavior of the remainder $\rho_N^+$]
\label{rk:asym}
Series expansion in $\Omega$ for the expression
$\frac{\mathrm{Im}((F_N^-)^2\mathrm{e}^{i\Omega})}{\mathrm{Re}((F_N^-)^2\mathrm{e}^{i\Omega})}$  reveals that 
\begin{align*}
\frac{\mathrm{Im}((F_N^-)^2\mathrm{e}^{i\Omega})}{\mathrm{Re}((F_N^-)^2\mathrm{e}^{i\Omega})}
  =&\; \frac{\Omega}{2N+1}
  \left(1+C_N\left[\frac{\Omega}{2N+1}\right]^2+\mathcal{O}\left(\left[\frac{\Omega}{2N+1}\right]^4\right)\right),
\end{align*}
with 
$C_N=\left\{\begin{tabular}{ll}
         $1/3$ & $N=0$,\\[.4ex]
         $\frac{N}{2N-1}$ & $N>0$.         
        \end{tabular}\right.$ 
Combing this estimate with \eqref{est-lam}, we obtain
\begin{align}
\label{est-asy}
 \rho_N^+ = i\left( \mathrm{Im}(\En)+ 
   \mathrm{Re}(\En)\frac{\Omega}{2N+1}\right)
   +\mathcal{O}\left(\mathrm{Re}(\En)\left[\frac{\Omega}{2N+1}\right]^3+|\En|^2\right).
\end{align}
It remains to estimate $\En$. This was discussed in detail in \cite[Section 3]{Ainsworth04}
in the cases where $\Omega\ll 1$ and where $N\rightarrow \infty$.
% for all range of $N$ and $\Omega$.
In particular, \cite[Corollary 1]{Ainsworth04} gives that, for $\Omega\ll 1$:
\begin{align*}
  \mathrm{Re}(\En)=&\; -\frac{\Omega^{2N+2}}{2}\left[\frac{N!}{(2N+1)!}\right]^2
+\mathcal{O}(\Omega^{2N+4}),\\
\mathrm{Im}(\En)=&\; i\Omega^{2N+3}\frac{N+1}{(2N+1)(2N+3)}\left[\frac{N!}{(2N+1)!}\right]^2
 +\mathcal{O}(\Omega^{2N+5}).
\end{align*}
Hence, 
% the asymptotic behavior of $\rho_N$ can be readily obtained from that of $\En$.
% In particular, 
for  $\Omega\ll 1$, we have
% \begin{itemize}
%  \item [(i)] [{\it small $\Omega$, any $N$}] Suppose $\Omega\ll 1$, then 
 \begin{align}
 \label{small-k}
\rho_N^+ = -i D_N \Omega^{2N+3}+\mathcal{O}(\Omega^{2N+5}),
 \end{align}
where $D_N=\left\{\begin{tabular}{ll}
         $1/24$ & $N=0$,\\[.4ex]
         $\frac{1}{2(2N+1)(2N+3)}\left[\frac{(N)!}{(2N+1)!}\right]^2$ & $N>0$.         
        \end{tabular}\right.$
%  \item [(ii)] [{\it large $\Omega$, large $N$}] Suppose $\Omega\in \mathbb{R}$
% \end{itemize}

The behavior in the case when $\Omega$ is fixed and $N\rightarrow \infty$ is more subtle.
In particular,  $\rho_N^+$ passes through three distinct phases \cite{Ainsworth04}:
\begin{itemize}
 \item [(1)] If $2N+1<\Omega - C \Omega^{1/3}$, $\rho_N^+$ oscillate but do not decay;
 \item [(2)] If $\Omega - o(\Omega^{1/3})<2N+1<\Omega + o(\Omega^{1/3})$, $\rho_N^+$ decays algebraically at a rate
 $\mathcal{O}(N^{-1/3})$;
 \item [(3a)] If $N,\Omega \rightarrow\infty$ in such a way that  
 $2N+1 = \kappa \Omega$ with $\kappa>1$ fixed, then 
 $\rho_N^+$ decays exponentially:
 \[
  \rho_N^+\approx i \mathrm{e}^{-\beta(N+1/2)}\left(1-\sqrt{1-\frac{1}{\kappa^2}}\right)^2,
 \]
 where $\beta>0$ is given by 
 \[
  \beta = \mathrm{ln}\frac{1+\sqrt{1-\frac{1}{\kappa^2}}}{1+\sqrt{1-\frac{1}{\kappa^2}}}
  -2\sqrt{1-\frac{1}{\kappa^2}};
 \]

 \item [(3b)] If $2N+1\gg \Omega$, then $\rho_N^+$ decays at a super-exponential rate:
 \[
\rho_N^+\approx -i \left[\frac{\mathrm{e}\Omega}{2\sqrt{(2N+1)(2N+3)}}\right]^{2N+2}\frac{2\Omega}{(2N+1)(2N+3)}.
 \]
\end{itemize}
\end{remark}
\begin{remark}[Dissipation error for small $\Omega$]
\label{rk:disp}
Series  expansion of $\Xi_N$ in $\Omega \ll 1$ yields that 
\[
 \Xi_N = 2-\Omega^2 +\mathcal{O}(\Omega^4).
\]
Hence, $|\Xi_N|<2$ for $\Omega\ll 1$, which implies that 
the two eigenvalues $\lambda_N^\pm$ are complex-conjugates and have unit  modulus. 
In particular, this means that method (A)
% Consequently, the method 
is non-dissipative.
\end{remark}

\newcommand{\am}{F_N^-}
\newcommand{\ap}{F_N^+}
\newcommand{\bm}{F_{N+1}^-}
\newcommand{\bp}{F_{N+1}^+}
\subsection{Conditions for an eigenvalue. General $\bld{\alpha}$}
Now we consider the case with a general
value of the parameter
$\alpha$ in the numerical fluxes \eqref{flux:2x2}. 
% this includes the choice of $\alpha$ in \eqref{alpha-opt} for the method (A*).
Our main result for the eigenvalue problem \eqref{eq:eig} in this case 
is summarised in the following theorem.
\begin{theorem}
 \label{thm:eig2}
 There exists a non-trivial Bloch wave solution of the form \eqref{block} 
 for the scheme \eqref{scheme:adv1d} with numerical fluxes \eqref{flux:2x2}
 if and only if $\lambda$ is a root of the  
algebraic equation 
\begin{align}
\label{quad}
\frac{1}{\lambda^2}\det(M(\lambda)) = {a_N\left(\lambda+\frac{1}{\lambda}\right)^2
+
b_N\left(\lambda+\frac{1}{\lambda}\right)+c_N} = 0,
\end{align}
where 
\begin{align*}
 a_N = &\; (-1)^N(1-\alpha^2)\am\ap,\\
 b_N = &\; -(-1)^N(1-\alpha^2)(\am\bm+\ap\bp)
 +(1+\alpha^2)(\ap\bm+\am\bp),\\
 c_N = &\; 2(-1)^N(1-\alpha^2)(\bm\bp-\am\ap)\\
 &\; -(1+\alpha^2)((\am)^2+(\ap)^2+(\bm)^2+(\bp)^2),
\end{align*}
are real constants,
and $M(\lambda)$ is the matrix 
\begin{align*}
M(\lambda) = 
\left[ \begin{tabular}{cccc}
  $\lambda F_{N+1}^--F_N^+$
  &
   $\lambda F_{N}^--F_{N+1}^+$&$0$&$0$\\
$0$&$0$&  $\lambda F_{N+1}^+-F_N^-$
  &
   $\lambda F_{N}^+-F_{N+1}^-$\\
  $\lambda (-1)^N$
  & $-1$&$-\alpha \lambda$&$-\alpha(-1)^N$\\
  $\alpha\lambda (-1)^N$
  & $\alpha$&$-\lambda$&$(-1)^N$\\
 \end{tabular}\right].
\end{align*}
\end{theorem}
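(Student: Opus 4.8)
The plan is to follow the template of the proof of Theorem \ref{thm:eig}, but now carry the parameter $\alpha$ through the algebra, keeping in mind that the reduction of the eigenfunctions to a two-parameter family no longer fully decouples the $U$ and $\Phi$ components. First I would repeat the first part of that argument verbatim: testing \eqref{eq:eig1} against $v=(1-s)(1+s)w$ for $w\in\mathbb{P}_{N-2}$ forces $\mathcal{L}^+U$ to be a combination of $P_N^{(0,1)}$ and $P_N^{(1,0)}$, hence $U=a^+\Psi_N^{1,+}+b^+\Psi_N^{2,+}$, and similarly $\Phi=a^-\Psi_N^{1,-}+b^-\Psi_N^{2,-}$; this step does not see $\alpha$ at all because it only uses the interior part of the bilinear forms. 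Thus every Bloch wave solution is parametrised by $(a^+,b^+,a^-,b^-)\in\mathbb{C}^4$, and we must extract the four scalar conditions that these coefficients must satisfy.

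Next I would produce those four conditions. Testing \eqref{eq:eig1} against $v=1-s$ and $v=1+s$, and \eqref{eq:eig2} against $\psi=1-s$ and $\psi=1+s$, gives four linear equations in $(a^+,b^+,a^-,b^-)$. Using \eqref{eq-op5}--\eqref{eq-op6} for the volume terms and \eqref{cst2-1}--\eqref{cst2-4} for the boundary values of the $\Psi$'s — exactly as in the $\alpha=1$ case — each equation becomes an explicit linear relation whose coefficients are affine in $\lambda$, $1/\lambda$ and $\alpha$. (When $\alpha=1$ the contributions of the $(-i\Omega)^{N+1}N!/(2N+1)!$ boundary anomalies cancelled against the volume terms; for general $\alpha$ they persist, but one checks they still combine cleanly.) This is exactly the matrix $M(\lambda)$ displayed in the statement, acting on the vector whose entries are (up to the normalisation removing the $(-i\Omega)^{N+1}$ factors) $(a^+, b^+, a^-, b^-)$ — the first two rows come from \eqref{eq:eig1} and the second two from \eqref{eq:eig2}, with the $\alpha$'s appearing precisely in the off-diagonal $2\times 2$ blocks that couple the $U$-coefficients to the $\Phi$-coefficients. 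A non-trivial Bloch solution therefore exists if and only if $\det(M(\lambda))=0$.

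The last step is the explicit evaluation of $\det(M(\lambda))$. Because $M(\lambda)$ is $4\times 4$ with the block structure shown, I would expand along the sparse top-left $2\times 2$ block (two zeros in each of the first two rows), reducing the determinant to a combination of the $2\times2$ minors formed from rows $3$ and $4$ times the entries $\lambda F_{N+1}^\mp - F_N^\pm$, $\lambda F_N^\mp - F_{N+1}^\pm$ from rows $1$ and $2$. Collecting terms one finds that $\det(M(\lambda))$ is a polynomial in $\lambda$ of degree $4$ which, after dividing by $\lambda^2$, depends on $\lambda$ only through the symmetric combination $\lambda+1/\lambda$ — this is the reflection symmetry $\lambda\mapsto1/\lambda$ of the eigenvalue problem (forward/backward modes), and it is forced once one notices the pairing of conjugate $F$'s. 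Writing $\lambda+1/\lambda$ as the variable then yields the quadratic with coefficients $a_N,b_N,c_N$; verifying that these are exactly the stated expressions, and that they are real (using $F_N^+=\overline{F_N^-}$, $F_{N+1}^-=\overline{F_{N+1}^+}$ for real $\Omega$), is the bookkeeping that closes the proof. I expect this final determinant expansion and the matching of coefficients to be the main obstacle: it is a genuine but routine $4\times4$ computation, and the only subtlety is tracking the signs $(-1)^N$ coming from $\Psi_N^{1,\pm}(\mp1)$ versus $\Psi_N^{2,\pm}(\pm1)$ and the $\alpha$ versus $\alpha^2$ structure; organising it via the $2\times2$ block minors, rather than a brute-force cofactor expansion, keeps it manageable. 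As a consistency check one should recover $\det(M(\lambda))/\lambda^2 = \lambda^2-\Xi_N\lambda+1$ (equivalently $(\lambda+1/\lambda)-\Xi_N$, up to the overall nonzero factor $-(1+\alpha^2)=-2$) when $\alpha=1$, reproducing \eqref{det} and Theorem \ref{thm:eig}.
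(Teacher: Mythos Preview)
Your proposal is correct and follows essentially the same path as the paper's own proof, which is itself only a brief sketch referring back to the $\alpha=1$ argument and the phrase ``straightforward but tedious algebraic manipulation''. One small inaccuracy worth flagging: your attribution of the rows of $M(\lambda)$ --- rows 1--2 from \eqref{eq:eig1}, rows 3--4 from \eqref{eq:eig2}, with $\alpha$ in the off-diagonal blocks --- does not match the matrix as displayed, since the first two rows contain no $\alpha$ and completely decouple $(a^+,b^+)$ from $(a^-,b^-)$, which cannot happen for a single test of \eqref{eq:eig1} (the flux there couples $U$ to $\Phi$ through $\alpha$); the rows of $M(\lambda)$ are obtained only after suitable linear combinations of the four raw equations, but this is harmless for your argument since row operations do not affect the vanishing of the determinant.
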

\begin{proof}
The proof is similar to that of Theorem \ref{thm:eig}, and we only sketch the main differences.
Let $\lambda$ be an eigenvalue of \eqref{eq:eig}, with $(U,\Phi) \in \Pn \times \Pn$ 
the corresponding (non-trivial) eigenfunctions.
As before, using the fact that 
\[
(\mathcal{L}^+ U, v) = (\mathcal{L}^- \Phi, v) = 0,\quad 
\forall v = (1-s)(1+s)w,\text{ with }w\in \mathbb{P}_{N-2},
\]
we obtain
\[
 U = a^+ \Phi_N^{1,+}
 +b^+ \Phi_N^{2,+}, \quad 
  \Phi = a^- \Phi_N^{1,-}
 +b^- \Phi_N^{2,-}.
\]
The coefficients $a^\pm$ and $b^\pm$ must now satisfy the {\it four} algebraic equations 
corresponding to choosing test functions in
\eqref{eq:eig} of the form $v=1\pm s$ and $\phi=1\pm s$. This leads to 
a $4\times 4$ system of homogeneous linear equations for the vector $\bld {\mathrm{x}}=[a^+,b^+,a^-,b^-]^T$. 
By straightforward 
% With some simple 
but tedious algebraic manipulation, we arrive at the system of equations 
$
 M(\lambda)\bld {\mathrm{x}} = 0,
$
where $M(\lambda)$ is defined above.
% Hence, $\lambda$ must be the solution to $\det(M(\lambda))=0$ for the above equation to exists
% a non-trivial solution.
\end{proof}

\begin{remark}[Spurious modes]
% A precise characterisation of the eigenvalues  similar to the case $\alpha=1$ discussed in subsection 
% \ref{sec:eig}
% for general $\alpha\not = 1$ 
% seems to be very technical and tedious, if possible, to derive.
Note that, in  the case $\alpha=1$, the 
equation \eqref{quad} is a {\it linear} function for the variable 
$z = \lambda + 1/\lambda$, which results in two roots (approximating the two  
physical modes $\mathrm{e}^{\pm i\Omega}$).
However, in the general case with $|\alpha| \not=1$, the equation \eqref{quad} 
is {\it quadratic} in $z$ leading to $4$ roots. Two of these roots will 
approximate the physical modes $\mathrm{e}^{\pm i\Omega}$, 
while the remaining two roots correspond to 
% and the other two approximating some 
{\it spurious} modes.
The presence of spurious modes in numerical schemes for wave equations is well-known: in \cite{Ainsworth04}
it was shown that the centered DG method (C) also has a spurious mode.
A precise characterisation of these eigenvalues
similar to the case $\alpha=1$ discussed in subsection 
\ref{sec:eig} for any $|\alpha|\not = 1$ 
is rather 
% seems to be very 
technical to derive and is not pursued further here;
% and tedious, if possible, to derive;
see, for example, 
in \cite{Ainsworth04} the discussion on central DG method ($\alpha=0$).
% As we have an extra freedom to tune the parameter $\alpha$
\end{remark}

\begin{remark}[Dissipation error for small $\Omega$]
\label{rk:disp2}
When $\Omega\ll 1$, we show in the following that, if 
 $\alpha\not\in \{0,\pm1\}$, then 
two of the four roots of the equation \eqref{quad} 
are complex-conjugate to each other and have  modulus $1$, which approximate 
the physical modes $\mathrm{e}^{\pm i\Omega}$, 
and the other two are real, which are non-physical.
Hence, the method is non-dissipative.

% We assume that $\alpha\in \mathbb{R}$, and  $\alpha\not\in \{0,\pm1\}$. 
Denoting $
 f(z) = a_Nz^2+b_N z + c_N,
$
series expansion on $\Omega\ll 1$ yields that 
\[
 f(2) f(-2) = -64 \alpha^2 \Omega^2 +\mathcal{O}(\Omega^4).
\]
This implies that $ f(2) f(-2)<0$ for $\Omega\in\mathbb{R}^+$ small enough.
Hence, the quadratic equation $f(z)=0$ has two real roots $z_1, z_2$, with 
$|z_1|<2$ and $|z_2|>2$.
This implies that the four roots of the equation \eqref{quad} are determined by the following two quadratic 
equations:
\[
 \lambda+1/\lambda_1=z_1, \text{ or }
  \lambda+1/\lambda_1=z_2.
\]
Since $|z_1|<2$, the two roots of the equation $ \lambda+1/\lambda_1=z_1$ are complex-conjugate to each 
other with modulus $1$.
Since $|z_2|>2$, the two roots of the equation $ \lambda+1/\lambda_1=z_2$ are real.
\end{remark}

\begin{remark}[Leading terms of the relative error $\rho_{N}^+$ for $\alpha$ in \eqref{alpha-opt}] 
\label{rk:asym2}
Remark \ref{rk:asym}  shows that the leading term in the relative error $\rho_N^+$ is of order 
$\Omega^{2N+3}$ for $\alpha=1$. 
Intuitively, one might expect be able to get an even higher order leading term for the relative 
error through a judicious choice of the parameter $\alpha$.
This was shown to be the case in \cite{AinsworthMonk06} for DG methods for two-wave wave equations.

Symbolic manipulation for degree up to $N=17$ demonstrates that, with $\alpha$ given \eqref{alpha-opt},
the
relative error enjoys an additional two orders of accuracy
\[
 \rho_N^+ = -i \frac{E_N}{(2N+1)^{2N+2}}{\Omega}^{2N+5}+\mathcal{O}(\Omega^{2N+7}) 
\]
with the coefficient $E_N$ up to 4 digits accuracy
given in the following table for $N\le 17$:
\begin{table}[htbp]
\caption{
Coefficients $E_N$ up to four digits accuracy for $N\le 17$.
}
\label{table-e}
\centering
\begin{tabular}{ccccccc}
\noalign{\smallskip}
\hline 
\noalign{\smallskip}
Degree&0&1&2&3&4&5\\ [1ex]
$E_N$&
% 5.5556e-03&   1.4196e-02&1.0088e-02&9.6935e-03&1.1399e-02&1.4749e-02
5.555e-03&   1.419e-02&1.008e-02&9.693e-03&1.139e-02&1.474e-02
\\
\noalign{\smallskip}
 \hline
\noalign{\smallskip}
Degree&6&7&8&9&10&11\\ [1ex]
$E_N$&
%  2.0238e-02&   2.8926e-02&   4.2618e-02&   6.4297e-02&   9.8868e-02 &  1.5443e-01
 2.023e-02&   2.892e-02&   4.261e-02&   6.429e-02&   9.886e-02 &  1.544e-01\\
\noalign{\smallskip}
 \hline
\noalign{\smallskip}
Degree&12&13&14&15&16&17\\ [1ex]
$E_N$&
% 2.4442e-01 &  3.9122e-01&   6.3228e-01 &  1.0305e+00&   1.6921e+00&   2.7969e+00
2.444e-01 &  3.912e-01&   6.322e-01 &  1.030e+0&   1.692e+0&   2.796e+0
\\
\noalign{\smallskip}
 \hline
\noalign{\smallskip}
\end{tabular}
\end{table}
\end{remark}

\section{Conclusion}
\label{sec:conclude}
A dispersion analysis was presented 
for the energy-conserving DG method \cite{FuShu18}
for the one-wave wave equation.
Method with parameter $\alpha=1$
is shown to be superior to both the upwinding DG method and centered DG method in terms 
of dispersion error, with the leading term for the relative error $\rho_N$ of order 
$\Omega^{2N+3}$ for any polynomial degree $N$.
A judicious choice of the parameter $\alpha$ \eqref{alpha-opt} 
gives method (A*) which was shown to enjoy 
% which leads 
a leading term of order $\Omega^{2N+5}$ for the error $\rho_N$.
% , which was verified for 
% polynomial degree $N$ up to $17$ using symbolic calculation. 
% Numerical results were shown to support the theoretical findings.
% Numerical results on nonuniform meshes also indicated that the above mentioned 
% optimization on $\alpha$ has little effect on the accuracy, compared with the 
% DG method with $\alpha=1$.
% While superior performance for method (A*) with $\alpha$ given 
% $\alpha=1$
% Numerical results on nonuniform meshes indicates on 

\bibliographystyle{siam}
% \bibliography{../../../BIB/all}

\end{document}